\newcommand\N{{\mathbb N}}
\newcommand\R{{\mathbb R}}
\newcommand\eps{{\varepsilon}}
\newcommand\fP{{\mathbb P}}
\newcommand\LpoI{{\Lambda_{p,I}^0}}
\newcommand\LpoII{{\Lambda_{p,II}^0}}
\newcommand{\pn}{\par\noindent}
\newtheoremstyle{question}	
     {}						
     {1\baselineskip}			
     {\normalfont}				
     {}						
     {\bfseries\sffamily}		
     {.}						
     {\newline}				
     {}						
\newtheorem*{theo*}{Th\'eor\`eme}
\newtheorem*{defin*}{D\'efinition}
\newcommand{\indep}{\rotatebox[origin=c]{90}{$\models$}}
 \definecolor{gris25}{gray}{0.75} 
\newtheorem{thm}{Theorem}[section]
\newtheorem{lem}{Lemma}[section]
\newtheorem{rem}{Remark}
\newtheorem{defn}{Definition}[section]
\newtheorem{prop}{Proposition}[section]
\newcommand{\E}{\mathbb{E}}      
\newcommand{\al}{\alpha} 
\newcommand{\be}{\beta} 
\newcommand{\ep}{\epsilon_n} 
\newcommand{\Lpo}{\Lambda_p^0}
\newcommand{\Lpe}{\Lambda_p^{K}}
\newcommand{\ninf}{\underset{n\rightarrow +\infty}{\longrightarrow}}
\newcommand{\tcoh}{L_{n,\tau}}
\newcommand{\X}{\mathbb{X}}
\newcommand{\fPo}{\mathbb{P}_0}
\newcommand{\fPk}{\mathbb{P}_K}
\newcommand{\liminfini}{\underset{n\rightarrow +\infty}{\lim}}
\newcommand{\fPal}{\mathbb{P}_{\alpha}}
\newcommand{\fPb}{\mathbb{P}_{\beta}}
\newcommand{\Balo}{B_{\alpha}^0}
\newcommand{\BalK}{B_{\alpha}^K}
 \newcommand{\footnoteremember}[2]{
\footnote{#2}
\newcounter{#1}
\setcounter{#1}{\value{footnote}}
}
\newcommand{\footnoterecall}[1]{
\footnotemark[\value{#1}]
}
\begin{document}

\author{M. Boucher \footnoteremember{IDP}{Institut Denis Poisson, Universit\'e d'Orl\'eans, Universit\'e de Tours, CNRS, Route de Chartres, B.P. 6759, 45067, Orl\'eans cedex 2, France}, D. Chauveau\footnoterecall{IDP}, M. Zani \footnoterecall{IDP}\footnote{Corresponding author: marguerite.zani@univ-orleans.fr}
}
\date{}
\title{Coherence of high-dimensional random matrices in a Gaussian case : application of the Chen-Stein method }
\maketitle
\underline{Abstract :}
\newline
\footnotesize
This paper studies the $\tau$-coherence of a $(n \times p)$-observation matrix in a Gaussian framework.
The $\tau$-coherence is defined as the largest magnitude outside a diagonal bandwith of size $\tau$ of the empirical correlation  coefficients associated to our observations. 
Using the Chen-Stein method we derive the limiting law of the normalized coherence and show the convergence towards a Gumbel distribution. We generalize here the results of Cai and Jiang \cite{CaiJiang}. We assume that the covariance matrix of the model is bandwise. Moreover, we provide numerical considerations highlighting issues from the high dimension hypotheses. We numerically illustrate the asymptotic behaviour of the coherence with Monte-Carlo experiment using a HPC splitting strategy for high dimensional correlation matrices.
\par
{\bf Key words }: coherence, high-dimensional matrices, correlation, Chen-Stein method, Gaussian, GPGPU, random matrices, sparsity.
\normalsize

\section{Introduction}
Random matrix theory has known a huge amount of breakthroughs for these last twenty years. Developments have been made in theoretical fields as well as in various applied domains. Among these applications, one can cite high-energy physics (e.g. \cite{Forrester} on log--gases), electronic engineering (signal and imaging, see \cite{Donoho, CandesTao,CandesRombergTao1,CandesRombergTao2} ), statistics (see \cite{Johnstone1,Johnstone2,butucea2013sharp}).
Earlier works on random matrices were focused on spectral analysis of eigenvalues and eigenvectors (see \cite{wigner} or \cite{Meh,BaiSilver}, see also \cite{BordeChafai} and references therein). For a reference on random matrices theory, see \cite{BaiSilver,Meh,AndGuioZeit}.
\par
In statistics more particularly, random matrices  are useful for inference in a high dimensional framework. One can think about high dimensional regression, hypothesis testing for high dimension parameters, inference for large covariance matrices.  See e.g. \cite{BaiSarana,CandesTao1,BaiJiangYao,Caietal1,Caietal3,BickelTsybakov}. In these contexts, the dimension $p$ is much bigger than the sample size $n$.
\par
We will be focusing here on the covariance structure of a certain type of random matrices. More precisely, we will be examining the coherence of random matrices with bandwise covariance of size $\tau>1$. Let us define the model.
\par
Let $\left(X^1, X^2, \dots, X^p\right)$ be a $p$ -- dimensional Gaussian random vector with mean\\ $\mu=^t(\mu^1,\cdots, \mu^p)$ in  $\R^p$ and covariance matrix $\Sigma=(\sigma_{kj})_{kj} \in \R^{p}\times \R^{p}$. 
For $n\in \N^*$, we consider a random sample $\left(X_i^1, X_i^2, \dots, X_i^p\right)_{i=1, \dots, n}$ issued from $\left(X^1, X^2, \dots, X^p\right)$, arranged in a 
$(n,p)$-matrix $\mathbb{X}$. From a statistical point of view, it means that each row can be seen as an individual and each column as a character. We will write $\mathbb{X}^k$ the $k^{th}$ column of $\mathbb{X}$. 
We are interested in the correlation terms of $\mathbb{X}$ in models where $p$ is much larger than $n$. The classical empirical Pearson's correlation coefficient is defined by : 

\begin{equation} \rho_{kj}=\frac{\sum\limits_{i=1}^{n}\left(X_i^k-\overline{\X^k}\right)\left(X_i^j-\overline{\X^j}\right)}{\sqrt{\sum\limits_{i=1}^{n}\left(X_i^k-\overline{\X^k}\right)^2}\sqrt{\sum\limits_{i=1}^{n}\left(X_i^j-\overline{\X^j}\right)^2}}=\frac{<\X^k-\overline{\X^k}\mathbf{1}_n,\X^j-\overline{\X^j}\mathbf{1}_n>}{\parallel \X^k-\overline{\X^k}\mathbf{1}_n \parallel . \parallel \X^j-\overline{\X^j}\mathbf{1}_n \parallel}\,,
\end{equation}
where $\mathbf{1}_n$ is the identical $1$--vector in $\mathbb R^n$:
\[\mathbf{1}_n=^t(1, 1, \dots, 1)\in \R^n\]
and $\parallel x \parallel$ stands for the the Euclidian norm of the vector $x$ and $\overline{\X^k}$ is the empirical mean of the $k^{th}$ column $\X^k$:
\[\overline{\X^k}=\frac{1}{n}\sum\limits_{i=1}^{n}X_i^k\, . \]
Equivalently if the mean $\mu= {}^t\left(\mu^1, \dots, \mu^p\right)$ is known,

\begin{equation} \tilde\rho_{kj}=\frac{\sum\limits_{i=1}^{n}\left(X_i^k-\mu^k\right)\left(X_i^j-\mu^j\right)}{\sqrt{\sum\limits_{i=1}^{n}\left(X_i^k-\mu^k\right)^2}\sqrt{\sum\limits_{i=1}^{n}\left(X_i^j-\mu^j\right)^2}}=\frac{<\X^k-\mu^k\mathbf{1}_n,\X^j-\mu^j\mathbf{1}_n>}{\parallel \X^k-\mu^k\mathbf{1}_n \parallel . \parallel \X^j-\mu^j\mathbf{1}_n \parallel}
\end{equation}

The empirical correlation coefficients $\rho_{kj}$ (resp. $\tilde\rho_{kj}$) are arranged in a $(p,p)$-matrix $R_n$ (resp. $\tilde R_n$) which is the empirical correlation matrix of $\X$. 
\begin{defn}
	With the notations above, we can define the largest magnitude of the off--diagonal terms of $R_n$ and $\tilde R_n$:
	\begin{equation}\label{defcoherence}
	L_n=\max_{1\leq k<j\leq p}|\rho_{kj}|\,,\quad\tilde L_n=\max_{1\leq k<j\leq p}|\tilde\rho_{kj}|
	\end{equation}
	The quantity $\tilde L_n$ is defined as the coherence of the matrix $\mathbb {X}_n$. With a slight abuse of terminology, we will call both $L_n$ and $\tilde L_n$ coherence of $\mathbb {X}_n$.
\end{defn}
The notion of coherence has first appeared in signal theory as an indicator of the sparsity of a matrix. More precisely, it is involved in the so-called Mutual Incoherence Property (MIP), which can be explained as follows: a mesurement $(n,p)$ matrix $X$ is used to recover a $k$--sparse signal $\beta$ via linear mesurements $y=X\beta$ using a recovery algorithm. The condition
\[(2k-1)\tilde L_n<1\]
ensures the exact recovery of $\beta$ when $\beta$ has at most $k$ non zero entries. For details on this approach, see Donoho and Huo \cite{DonohoHuo}, Fuchs \cite{Fuchs}, Cai, Wang and Xu \cite{Caietal2}, and references therein.
\par
Another domain where covariance and correlation matrices are highly used is statistic theory, for example testing
$\Sigma= I$ against $\Sigma\neq I$. This issue has been considered in the case where $n$ and $p$ are of same order (i.e. $n/p\to \gamma\in(0,\infty)$ ) by Johnstone in the Gaussian case \cite{Johnstone1}, and Péché in the sub-Gaussian case \cite{Peche}. The test statistic relies -- according to PCA methods -- on the largest eigenvalue of the empirical covariance matrix $\lambda_{max}(\hat\Sigma_n)$. The asymptotic distribution of this maximum eigenvalue is the Tracy--Widom law. 
\par
However, testing $\Sigma=I$ can seem too restrictive, and one think about independance versus non independence in terms of correlation matrix i.e. testing
$R_n=I$ against $R_n\neq I$. According to previous results, one could think about using $\lambda_{max}(R_n)$. However, even if Tracy--Widom law is conjectured in this case (see \cite{Jiang04cor}, and see also \cite{HeinyMikosch} for a study on the i.i.d. case), one choose to study instead the coherence as a test statistic.
Jiang in \cite{Jiang04} first adressed this problem and showed strong consistency of $L_n$ and limit distribution of $L_n^2$ in the case where $n$ and $p$ are of the same order. Moments assumptions in \cite{Jiang04} and dimension for $p$ were substantially improved by a series of papers: 
Li and Rosalsky \cite{Rosal1}, Zhou \cite{ZHou07}, Liu, Lin and Shao \cite{LiuLinShao}, Li, Liu and Rosalsky \cite{Rosal2}, Li, Qi and Rosalsky \cite{Rosal3}.
In \cite{CaiJiangSpher} the authors consider the limiting distribution of the coherence in a spherical case. See also \cite{CaiZhangdiffcorr} for studies on the differential correlation matrices in high dimensional context.
\par
Lately Cai and Jiang \cite{CaiJiang} (see also the supplement \cite{CaiJiangPlus}) considered "ultra-high dimensions" i.e. $p$ as large as $e^{n^{\beta}}$. In this paper, they also present a variant of the coherence, the so--called $\tau$--coherence
aimed to test whether the covariance $\Sigma$ has a given bandwidth $\tau>1$, where $\tau=1$ would be a special case. We define it below:
\begin{defn}
	For any integer $\tau\geq 1$, we define the $\tau$--coherence as:
	\begin{equation}
	\tcoh = \underset{|k-j|\geqslant \tau}{\max}\left|\rho_{kj}\right|
	\end{equation}
\end{defn}
In \cite{CaiJiang} strong laws and convergence of distributions of $L_{n,\tau}$ are given as well. Recently, Shao and Zhou \cite{shao2014} studied coherence and $\tau$ coherence relaxing the normal hypothesis, improving assumptions on the moments of the entries and on the dimension $p$.

Our purpose is to generalize this model: 
we assume that $\Sigma = \left( \sigma_{kj} \right)_{1\leqslant k,j \leqslant p}$ is defined as follows:

\begin{equation}
\sigma_{kj}=
\left\lbrace
\begin{array}{cccc}
\gamma_{kj}\sigma_k\sigma_j  & \mbox{if} & |k-j|  < \tau\\
\epsilon_n\sigma_k\sigma_j & \mbox{if} & \tau \leq |k-j| \leq \tau + K \\
0 & \mbox{if} & \tau + K < |k-j| 
\end{array}\right..
\end{equation}

So, $\Sigma$ is divided into three parts : a central band of size $\tau\in \N$ , an outside part with null coefficients and a transitional bandwidth of size $K \in \N$. We have, for all $k \in [\![1;p]\!]$, $\sigma_k >0$; for all $k,j \in [\![1;p]\!]$, $\gamma_{kj} \in [-1,1]$ and $\left(\ep\right)_{n\geqslant 1}$ is a sequence of real numbers in $[-1,1]$ such that $\underset{n\rightarrow +\infty}{\lim}\left|\ep\right|=0$. To be more precise, the construction of $\Sigma$ suggests that if we take two $p$-vectors which are close (in term of indexes, for example $X^1$ and $X^2$), they will be correlated. If they are far enough one to another, they will be independent. But, if they are not so close and not so far, their correlation will decrease to zero when $n$ goes to infinity. This generalization of the model of \cite{CaiJiang} seemed to us more suited for real datasets. Later, we will see that both $\tau$ and $K$ may depend of $n$ and may go to infinity under sufficient hypotheses. Without loss of generality, we can assume that $\mu = 0_{\R^p}$ and for all $k \in [\![1,p]\!]$, $\sigma_k =1$.\\

All previously cited studies are highly related to the Chen-Stein method which we also use here. It relies on a Poisson approximation of weakly dependent events. For references on this method, see \cite{MR972770}, \cite{peccati} and references therein.\\

This paper is organized as follows: \cref{results} presents the main results, further on \cref{simu} gives some simulation results for our model, \cref{proofs} is devoted to the proof of the main result whereas \cref{tech} gather technical results and proofs of technical lemmas.
\par
The usual notations $u_n=o(v_n)$ and $u_n=O(v_n)$ stand for $u_n$ negligible with respect to $v_n$ and $u_n$ of the same order of $v_n$ respectively and asymptoticaly when $n\to\infty$.

\section{Main result}\label{results}
We focus on correlations not too big, i.e. not too close to $1$ or $-1$. Hence we define the following set:
\begin{defn}
	For any $\delta \in ]0,1[ $ we define by
	\[ \Gamma_{p,\delta}=\{ k \in [\![1;p]\!] \text{ : } |r_{kj}|> 1-\delta \text{ for some } j \in [\![1;p]\!] \text{ and } k \neq j\},\]
	where $(r_{kj})_{p\times p}$ is the correlation matrix issued from $\Sigma=(\sigma_{kj})_{p\times p}$.
\end{defn}

The main result of the paper is the following Theorem:
\begin{thm}\label{main_result}
	Let $n$ be an integer, $p=p_n$ a sequence such that $p_n \underset{n\rightarrow +\infty}{\longrightarrow} +\infty$. Let $(\epsilon_n)_{n\in \N^*}$ be a sequence of real number in $]-1,1[$. Let us assume the following conditions :
	\begin{enumerate}
		\item[Hyp 1 :] $\log(p_n)=o(n^{\frac{1}{3}})$ as $n\rightarrow +\infty$ \label{hyp1}
		\item[Hyp 2 :]\label{hyp2} $\tau=\tau(n)=o(p_n^t)$ as $n\rightarrow +\infty$ for any $t>0$.
		\item[Hyp 3 :] $\exists \delta \in ]0,1[$ such that $|\Gamma_{p,\delta}|=o(p_n)$ where $|\cdot|$ denotes the cardinality of the set.\label{hyp3}
		\item[Hyp 4 :] $\epsilon_n \sim \gamma \sqrt{\frac{\log(p_n)}{n}}$ as $n\rightarrow +\infty$ and $\gamma \in ]-2+\sqrt 2,2-\sqrt 2[$ \label{hyp4}
		\item[Hyp 5 :] $K=K(n)=O\left(p_n^{\nu}\right)$ where $\nu \in ]0, c(\gamma,\delta)[$ \label{hyp5}
	\end{enumerate}
	and $c(\gamma,\delta)=min\left(\frac{1}{3}(\frac{1}{2}\gamma^2-2|\gamma|+1),\frac{\delta^2(2-\delta)^2}{36}\right)$.\\
	Under these conditions, we can show that :
	\begin{equation}\label{mainlimit}
	nL_{n,\tau}^2-4\log(p_n)+\log(\log(p_n))\underset{n\rightarrow +\infty}{\overset{\mathcal{L}}{\longrightarrow}}Z
	\end{equation}
	where $Z$ has the cdf $F(y)=e^{-\frac{1}{\sqrt{8\pi}}e^{-\frac{y}{2}}}$ for all $y \in \R$.
\end{thm}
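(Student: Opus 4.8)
The plan is to recast the statement \eqref{mainlimit} as a Poisson approximation for the number of anomalously large off-band correlations and then invoke the Chen--Stein bound. Fix $y\in\R$ and set the threshold $t_n=t_n(y)=\tfrac1n\bigl(4\log p_n-\log\log p_n+y\bigr)$, so that the event $\{nL_{n,\tau}^2-4\log p_n+\log\log p_n\le y\}$ coincides with $\{\max_{|k-j|\ge\tau}|\rho_{kj}|\le\sqrt{t_n}\}$. Introducing, for the index set $I=\{(k,j):1\le k<j\le p,\ |k-j|\ge\tau\}$, the events $A_{kj}=\{|\rho_{kj}|\ge\sqrt{t_n}\}$ and the count $W=\sum_{(k,j)\in I}\mathbf{1}_{A_{kj}}$, the event in \eqref{mainlimit} is exactly $\{W=0\}$. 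It therefore suffices to prove $P(W=0)\to e^{-\lambda}$ with $\lambda=\lambda(y)\to\tfrac{1}{\sqrt{8\pi}}e^{-y/2}$, since then the limit cdf is $F(y)=e^{-\lambda(y)}$, the announced Gumbel law.

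First I would remove the nuisance columns. By Hyp 3 the set $\Gamma_{p,\delta}$ of indices carrying a true correlation larger than $1-\delta$ has cardinality $o(p_n)$, so the pairs meeting $\Gamma_{p,\delta}$ number $o(p_n^2)$; a crude tail bound for correlations of strongly dependent columns shows they do not contribute to $W$, and the maximum may be taken over pairs of ``good'' columns, whose true correlation is $0$ (when $\tau+K<|k-j|$) or $\epsilon_n$ (when $\tau\le|k-j|\le\tau+K$). The crucial analytic input is the moderate-deviation tail of a sample correlation coefficient: for two independent Gaussian columns the centered and normalized columns are uniform on $S^{n-2}$, the null density of $\rho_{kj}$ is proportional to $(1-r^2)^{(n-4)/2}$, and a Laplace estimate (valid since $\log p_n=o(n^{1/3})$ kills the higher-order terms) gives $P(A_{kj})\sim\sqrt{\tfrac{2}{\pi n t_n}}\,e^{-n t_n/2}$. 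Plugging in $t_n$ and using that the number of zero-correlation pairs is $\sim p_n^2/2$ yields $\sum P(A_{kj})\to\tfrac{1}{\sqrt{8\pi}}e^{-y/2}$. For the transitional pairs, writing $\X^j=\epsilon_n\X^k+\sqrt{1-\epsilon_n^2}\,Z$ with $Z$ independent of $\X^k$ shifts the mean to $\epsilon_n\sim\gamma\sqrt{\log p_n/n}$, so the relevant tail is of order $p_n^{-(2-|\gamma|)^2/2}$ up to logarithmic factors; since there are $O(p_nK)=O(p_n^{1+\nu})$ such pairs, Hyp 4--5 make their total contribution $o(1)$ (the positivity of the exponent is exactly $\gamma\in\,]-2+\sqrt2,\,2-\sqrt2[$). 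Hence $\lambda\to\tfrac{1}{\sqrt{8\pi}}e^{-y/2}$.

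It remains to control the Chen--Stein error $|P(W=0)-e^{-\lambda}|\le b_1+b_2+b_3$ in the Arratia--Goldstein--Gordon form, where the dependency neighbourhood $B_{(k,j)}$ collects the pairs sharing a column with $(k,j)$ or having a column within band-distance $\tau+K$ of $\{k,j\}$. Because $\Sigma$ is bandwise and the vector is Gaussian, two pairs whose four column indices are mutually farther apart than $\tau+K$ are genuinely independent, so $A_{(k,j)}$ is independent of $\sigma(A_\beta:\beta\notin B_{(k,j)})$ and $b_3=0$ exactly; this is the structural simplification afforded by the model. The term $b_1=\sum_\alpha\sum_{\beta\in B_\alpha}p_\alpha p_\beta$ is $O\!\bigl(\lambda^2(\tau+K)/p_n\bigr)$ by the count $|B_\alpha|=O(p_n(\tau+K))$, and it vanishes by Hyp 2 and Hyp 5.

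The main obstacle is the term $b_2=\sum_\alpha\sum_{\beta\in B_\alpha,\ \beta\ne\alpha}P(A_\alpha\cap A_\beta)$, driven by the $O(p_n^3)$ triples $(k,j,j')$ whose events share the column $k$. Bounding these requires a joint moderate-deviation estimate for $(\rho_{kj},\rho_{kj'})$: conditionally on column $k$ the two correlations are independent, and evaluating the resulting double integral against $(1-r^2)^{(n-4)/2}$ shows $P(A_{kj}\cap A_{kj'})$ is small enough that $b_2\to0$ precisely under the exponent $c(\gamma,\delta)$, the factor $\tfrac13$ in its first branch arising from this three-column count over the transitional band and the branch $\delta^2(2-\delta)^2/36$ from the analogous bound on triples meeting $\Gamma_{p,\delta}$. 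I expect this joint-tail analysis, together with the bookkeeping needed to transfer from the actual statistic $\rho_{kj}$ (empirical centering, random normalization) to the exact spherical representation on $S^{n-2}$, to be the technically delicate part; once $b_1,b_2\to0$ and $b_3=0$, the Chen--Stein inequality gives $P(W=0)=e^{-\lambda}+o(1)$ and the theorem follows.
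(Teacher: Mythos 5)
Your overall architecture is the same as the paper's: a Chen--Stein/Arratia--Goldstein--Gordon Poisson approximation at the threshold $a_n(y)$, with the index set purged of the pairs meeting $\Gamma_{p,\delta}$, the limit $\lambda\to\frac{1}{\sqrt{8\pi}}e^{-y/2}$ carried entirely by the $\sim p_n^2/2$ truly independent pairs, the transitional band contributing $o(1)$ to $\lambda$ under the constraint $1+\nu<c_\gamma$ (which is exactly where $\gamma\in\,]-2+\sqrt2,2-\sqrt2[$ enters), and $b_1,b_3$ easy. Your variant of working directly with the exact null density $(1-r^2)^{(n-4)/2}$ of $\rho_{kj}$ instead of first replacing $nL_{n,\tau}$ by $V_{n,\tau}=\max|{}^tX^kX^j|$ (the paper's Proposition \ref{convLnvn}, via Jiang's comparison lemma and a tightness argument) is legitimate for the independent pairs, though you would still need a moderate-deviation statement for the mean-shifted products on the transitional band, which is where Hyp 1 is actually consumed.

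The genuine gap is in $b_2$, which you correctly identify as the crux but then dispatch by assertion. Two problems. First, your only quantitative device --- conditioning on the shared column $k$ to decouple $\rho_{kj}$ and $\rho_{kj'}$ --- handles only neighbours $\beta$ that share a column with $\alpha$ and whose other columns are conditionally independent; it produces the bound $O(p_n^{-4+\eps})$ per pair and the harmless total $O(p_n^{-1+\eps})$. But with your (necessarily large) neighbourhood $B_\alpha$, the delicate contributions come from pairs $\beta=(u,v)$ that share \emph{no} column with $\alpha=(k,j)$ yet have $u$ within band-distance of $k$ and $v$ within band-distance of $j$: there the four columns form a genuinely correlated Gaussian quadruple, no single conditioning decouples the two events, and one needs a joint moderate-deviation bound of the form $\fP(A_\alpha\cap A_\beta)=O(p_n^{-2-2c^2+\eps})$ with $c=\delta(2-\delta)/6$ depending on the cross-correlation level $1-\delta$ (the paper imports this from Cai--Jiang's Lemma 6.11 and runs an eight-case analysis over the possible $4\times4$ covariance structures). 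Second, and consequently, your attributions of the two branches of $c(\gamma,\delta)$ are wrong: the branch $\delta^2(2-\delta)^2/36=c^2$ does \emph{not} come from ``triples meeting $\Gamma_{p,\delta}$'' (those pairs are removed from the index set before Chen--Stein is ever applied, precisely so that all residual cross-correlations are $\le 1-\delta$); it is the constraint $\nu<c^2$ forced by the $O(p_n^{2}K^2)$ non-column-sharing correlated neighbours just described. Likewise $\frac13(c_\gamma-1)$ arises from the $O(p_nK^3)$ pairs with both $\alpha$ and $\beta$ in the transitional band, bounded by the \emph{individual} tail $\fPk=o(p_n^{-d})$, $d<c_\gamma$, not from a three-column joint estimate. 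Since the specific constant $c(\gamma,\delta)$ in Hyp 5 is exactly the record of which of these competing constraints bind, a proof that cannot locate where each branch comes from cannot establish the theorem as stated; this part needs the full case analysis (or an equivalent uniform joint-tail lemma), not a Cauchy--Schwarz or conditioning shortcut.
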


We can observe that it is the same distribution as in \cite{CaiJiang}. The band in the covariance matrix which contains terms $\eps_n$ is a smooth transition between the central band and the external part of the matrix with null terms. This model is an illustration of vanishing dependence when the components are too far one from each other. In \cite{CaiJiang}, the central band of the matrix is as large as $\tau$ with the condition of theorem \ref{hyp3}. If we suppose $K = cst < +\infty$, we boil down to the same model. Indeed, we have $\tilde{\tau}=\tau + K$ wich is the new width of the non--null bandwidth, and $\tilde{\tau}$ is still such that  :
\begin{equation}
\forall t>0 \text{,   } \tilde{\tau}=o(p^t) \text{  as  } n \rightarrow +\infty.
\end{equation}

\section{Numerical aspects}\label{simu}

In this section, we provide some simulated examples to illustrate the behavior of our asymptotic result in practical simulations ($n$ and $p$ both large but finite). For this, we use the R Statistical Software \cite{RSoft}. A difficulty comes from the fact that in our context, we have to compute correlations of large matrices. We need Gaussian observation matrices of size $n \times p$ with $\log\left(p\right) = o\left(n^{\frac{1}{3}}\right)$. It means that for a large $n$, for example $n = 4000$, we will have $p \approx 45000$ taking $p = \left[\exp\left( n^{\frac{1}{3.5}}\right)\right]   $ in our simulations (where $\left[x\right]$ is the integer part of $x$). For each $(n \times p)$-observation matrix, we have to compute the $(p \times p)$-correlation matrix to compute the $\tau$-coherence. For the range of $p$ that we consider, we can observe the evolution of the size of the $(p \times p)$-matrix in Gb according to $n$ in Figure \ref{fig:tailleGb}. For example, with $n = 4000$ and $p = 44112$, we have, for correlation stored in double, a $14.5 Gb$ $(p \times p)$-matrix which is very large for a common computer. We must find a way to compute the $\tau$-coherence without loading the entire $(p \times p)$-correlation matrix in the computer memory (RAM).\\
\vspace{-0.83cm}
\begin{figure}[h]
	\begin{center}
		\includegraphics[width=0.7\linewidth]{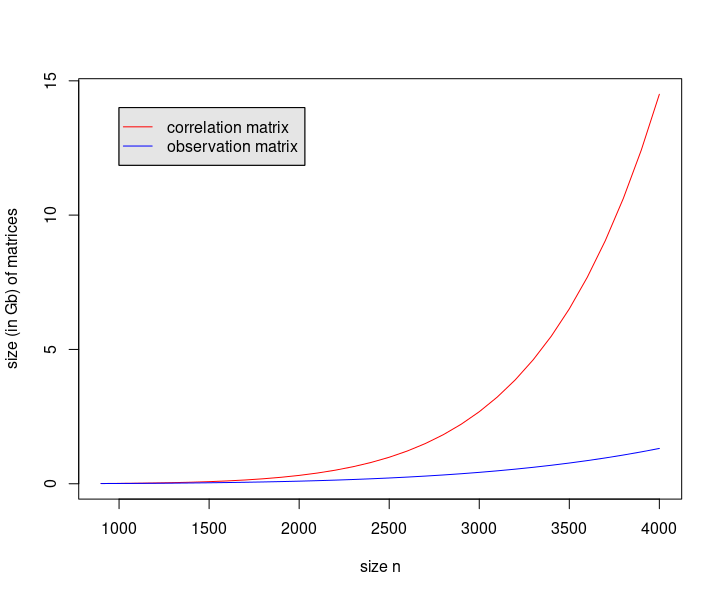}
	\end{center}
	\caption{\small Size of $(p \times p)$-correlation matrix and $(n \times p)$-observation matrix according to $n$ with $p=\left[\exp(n^{1/3.5})\right]$ for real numbers stored as double
		precision numbers.}
	\label{fig:tailleGb}
\end{figure}

The idea is to generate the $(n \times p)$-observation matrix by packets of columns. Each packet will have a size $(n \times Tb)$ where $Tb$ is choosen by the user. With these packets of columns, we compute all correlation  blocks of size $(Tb \times Tb)$ between each pair of packets of columns. In that way, we must choose a size $Tb$ in order to have two blocks fitting simultaneously in the computer memory. Then, we can compute the $\tau$-coherence by taking the largest coefficient in absolute value in our block paying attention to wether the block corresponds to the central band (with bandwith $\tau$) or not.\\ 

Using this strategy, we can generate correlation matrices even if $p$ is very large, so that we are able to study the limiting distribution of the $\tau$-coherence. In that way, to illustrate our theorem, we consider the following parameters :
$$ p = \left[exp\left(n^{1/3.5}\right)\right] \text{   },\text{   } \tau = 5*\left[\log(p)\right] \text{   },\text{   } K = 10*\left[n^{1/10}\log(p)\right] \text{   },\text{   } \eps_n = 0.1*\sqrt{\frac{\log(p)}{n}}$$

Our purpose here is to simulate a sample of $\tau$-coherence by a Monte-Carlo procedure in order to compare its empirical distribution with the asymptotic one. We thus run $R = 200$ replications of the following procedure, simulating $R$ times the matrices of observations and computing the correlations per blocks. For each replications, we generate an observation matrix $\mathbf{X}$ of size $(n \times p)$ using the following numerical scheme : 

\begin{equation}\forall i \in [\![1,n]\!], \forall j \in [\![1,p]\!], X_i^j= \sum\limits_{k=j}^{j+K-1}\eps_nY_i^k + \sum\limits_{k=j+K}^{j+K+2\tau}r_kY_i^k+\sum\limits_{j+K+2\tau+1}^{j+2\tau+2K}\eps_nY_i^k
\end{equation}
where all coefficients $\left(r_k\right)_{1 \leqslant k \leqslant 1+2\tau}$ are real numbers in $[-1,1]$ (we take\\ $r_1, \dots r_{1+2\tau} \overset{i.i.d}{\sim} \mathcal{U}_{[-1,1]}$ in the simulation), $X_i^j$ is the coefficient of $\mathbf{X}$ on the $i^{th}$ line and the $j^{th}$ column and all random variable $Y_i^k \overset{i.i.d}{\sim} \mathcal{N}\left(0,1\right)$ arranged in a $\left(n \times (p+2\tau+2K) \right)$-matrix $\mathbf{Y}$. We highlight the fact that $\mathbf{Y}$ is quite larger than $\mathbf{X}$. This numerical scheme is inspired by time series model.
\medskip\pn

\begin{figure}[h]
	\begin{center}
		\includegraphics[width=0.65\linewidth]{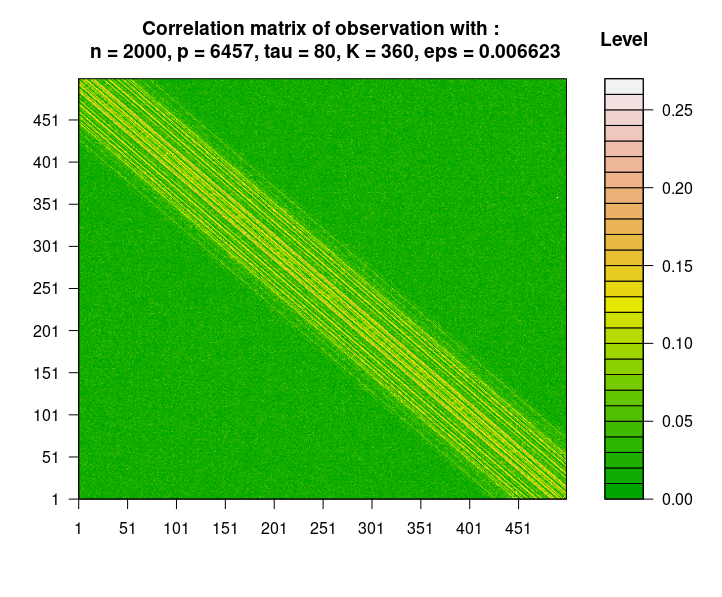}
	\end{center}
	\caption{\small Level plot of the correlation's structure with observations : zoom on the square $1:500$}
	\label{fig:modelecor1000}
\end{figure}

We can observe that we generate data following our model and obtain an observation matrix associated to a correlation matrix with a band structure in Figure \ref{fig:modelecor1000}. We recognize a central band with non-null coefficients. In fact, we also notice that the transition band with $\eps_n$' coefficients is not really recognizable but this is due to the fact that those coefficients are decreasing fastly to $0$ when $n$ goes to infinity (for instance, here, we have $\eps_n \approx 0.007$ not different from $0$ in the color scale).\\

With this observation matrix, we can use our procedure to compute the $\tau$-coherence. After running $R$ replications, we obtain a sample of $\tau$-coherence. In Figure \ref{fig:histregime14000}, we see that for $n$ large enough, the sample distribution seems to approximate the limiting one. 

\begin{figure}[h]
	\centering
	\includegraphics[width=0.8\linewidth]{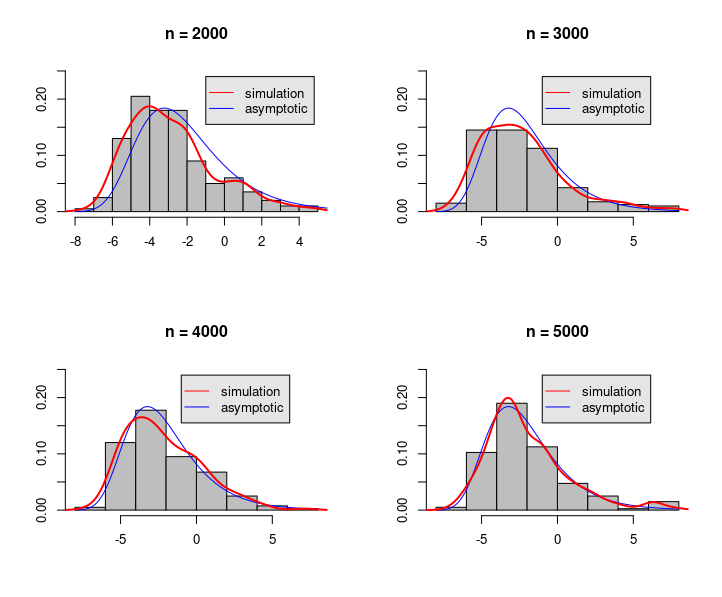}
	\caption{\small Histograms and Kernel
		density estimates for $n = 2000,3000,4000,5000$ and for $R = 200$ replications}
	\label{fig:histregime14000}
\end{figure}

Precisely, we compare the estimated density of the sample (in red) with the asymptotic density (in blue) which is defined by $f(x) = \frac{1}{2\sqrt{8\pi}}\exp\left(-\frac{1}{2}y -\frac{1}{\sqrt{8\pi}}\exp\left(-\frac{1}{2}y\right)\right)$ for all $x \in \mathbb{R}$. Also, in order to observe the convergence, we study numerically the distance between the sample and asymptotic distribution. We use the Kolmogorov, $\mathbb{L}^2$ and the Total Variation norms. We remind, respectively, the definition of these norms :
\begin{eqnarray}
d_{KS}(\hat{f},f)=\underset{x \in \mathbb{R}}{\sup}\left|F_n(x)-F(x)\right|,
\end{eqnarray}
\begin{eqnarray}
d_2(\hat{f},f)=\int \left|\hat{f}(x)-f(x)\right|^2dx,
\end{eqnarray}
\begin{eqnarray}
d_{TV}(\hat{f},f)=\frac{1}{2}\int \left|\hat{f}(x)-f(x)\right|dx.
\end{eqnarray}
We observe, in the results displayed in Figure \ref{fig:article_norm}, that the difference between both distributions decreases to $0$ when $n$ is increasing.\\

\begin{figure}[h]
	\centering
	\includegraphics[width=0.7\linewidth]{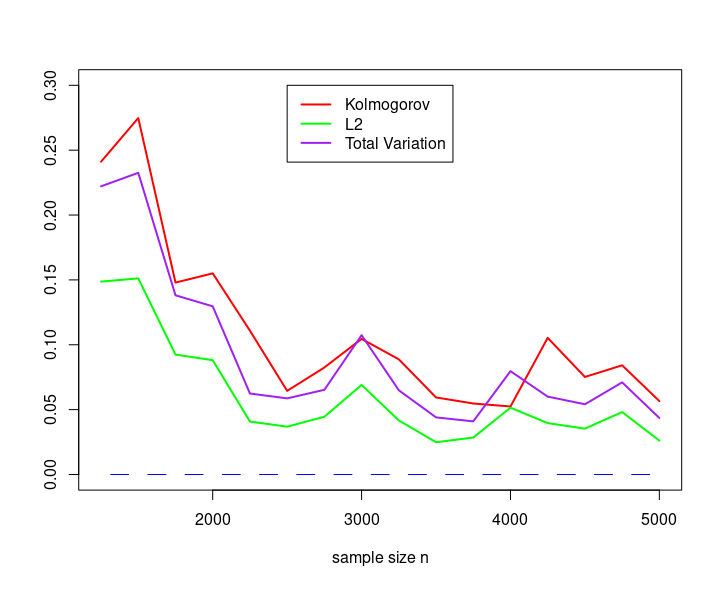}
	\caption{\small Evolution of Kolmogorov, $\mathbb{L}^2$ and Total Variation norm between simulated and asymptotic behavior}
	\label{fig:article_norm}
\end{figure}
These results provide numerical evidence that our limiting distribution is adequate. We also higlight the fact that the procedure we proposed here allows to compute $\tau$-coherence corresponding to any large matrix $X$ arising in actual (big) data experiments. However, this procedure is not very efficient if it is done with a classical programming. For example, computing only one replication for $n = 4000$ (and so $p= 44112$), requires about $90$ min to obtain the value of one $\tau$-coherence. In order to obtain more usable (i.e fast) codes in perspective of real-size applications, we are currently exploring HPC strategies to compute correlation blocks using GPGPU computation. We are very confident into the use of GPU to reduce simulation's time.


\section{Proof of the main result}\label{proofs}
\par
In this section, we describe the proof of our main result. First, we would like to higlight the fact that, as we said, we apply the Chen-Stein method. But, we do not apply it directly to the $\tau$-coherence. It is more efficient to use the Chen-Stein method to a new easier to handle random variable.
\medskip\pn
First of all, we introduce many notation which will be used along this paper.

\subsection{Notations}\label{nota}

\begin{itemize}
	
	\item $I=\{(k,j)\in [\![1,p]\!]^2 : 1 \leqslant k < j \leqslant p \}$
	\item $I_{\tau}=\{(k,j) \in I : |k-j| < \tau \}$
	\item $I_{K}=\{(k,j) \in I : \tau \leq |k-j| \leq \tau + K \}$
	\item $I_0=\{(k,j)\in I : |k-j| > \tau + K \}$
	\item $E_{\delta}=\{(k,j)\in I: k \in \Gamma_{p,\delta} \text{ or } j \in \Gamma_{p,\delta} \}$
	\item $\Lambda_p^{\tau}=\{(k,j) \in I : |k-j|<\tau \text{ and } \underset{1\leqslant k\neq q,j\neq q \leqslant p}{\max}\left(|r_{kq}|,|r_{jq}|\right)\leqslant 1-\delta \}$
	\item $\Lambda_p^{K}=\{(k,j) \in I : \tau \leq |k-j| \leq \tau+K \text{ and } \underset{1\leqslant k\neq q,j\neq q \leqslant p}{\max}\left(|r_{kq}|,|r_{jq}|\right)\leqslant 1-\delta \}$
	\item $\Lambda_p^{0}=\{(k,j) \in I : |k-j|>\tau + K \text{ and } \underset{1\leqslant k\neq q,j\neq q \leqslant p}{\max}\left(|r_{kq}|,|r_{jq}|\right)\leqslant 1-\delta \}$
\end{itemize}

With these different sets, we can write three different partitions of the set $I$ :
\begin{enumerate}
	\item $I = I_{\tau} \cup I_{K} \cup I_{0}$
	\item $I = E_{\delta} \cup \Lambda_p^{\tau} \cup \Lambda_p^{K} \cup \Lambda_p^{0}$
	\item $I_0 \cup I_{K} = \Lpe \cup \Lpo \cup \left[E_{\delta}\cap \overline{I_{\tau}}\right]$
\end{enumerate}
The following Lemma gives the sizes of these three sets:
\begin{lem}\label{card0}
	With the previous notations
	\begin{equation}\label{cardItau}
	|I_{\tau}|=(\tau -1)\left(\frac{2p-\tau}{2}\right),                                            
	\end{equation}
	\begin{equation}\label{cardIepsi}
	|I_{K}|=(K+1)\left(\frac{2p-K-2\tau}{2}\right),
	\end{equation}
	\begin{equation}\label{cardI0}
	|I_0|=\frac{(p-\tau-K-1)(p-\tau-K)}{2}.
	\end{equation}
\end{lem}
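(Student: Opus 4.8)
The plan is to reduce all three counts to a single elementary observation about gaps. Among the ordered pairs $(k,j)$ with $1\le k<j\le p$, the number of pairs whose gap $d:=j-k$ equals a fixed value is exactly $p-d$: indeed $k$ may range freely over $\{1,\dots,p-d\}$ and then $j=k+d$ is determined, while for $d\ge p$ there are no such pairs. Each of the three sets $I_\tau,I_K,I_0$ is defined purely by a constraint on $|k-j|=d$, so once this counting fact is in hand the cardinalities follow by summing $p-d$ over the relevant range of $d$ and evaluating arithmetic series.

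Concretely, for $I_\tau$ the constraint $|k-j|<\tau$ means $d\in\{1,\dots,\tau-1\}$, so
\[
|I_\tau|=\sum_{d=1}^{\tau-1}(p-d)=(\tau-1)p-\frac{(\tau-1)\tau}{2}=(\tau-1)\,\frac{2p-\tau}{2},
\]
which is \eqref{cardItau}. For $I_K$ the constraint $\tau\le d\le\tau+K$ gives $K+1$ consecutive values of $d$, and
\[
|I_K|=\sum_{d=\tau}^{\tau+K}(p-d)=(K+1)p-\sum_{d=\tau}^{\tau+K}d=(K+1)\left(p-\tau-\frac{K}{2}\right)=(K+1)\,\frac{2p-2\tau-K}{2},
\]
matching \eqref{cardIepsi}. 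Finally, for $I_0$ the constraint $d>\tau+K$ together with the forced bound $d\le p-1$ yields, after the substitution $m=p-d$,
\[
|I_0|=\sum_{d=\tau+K+1}^{p-1}(p-d)=\sum_{m=1}^{p-\tau-K-1}m=\frac{(p-\tau-K-1)(p-\tau-K)}{2},
\]
which is \eqref{cardI0}.

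There is no genuine obstacle here; the only points requiring care are fixing the ranges of $d$ exactly right, in particular the strict versus non-strict inequalities at the boundaries $\tau$ and $\tau+K$ and the upper cutoff $d\le p-1$, and correctly evaluating the arithmetic sums. As a consistency check I would verify that the three counts add up to $|I|=\binom{p}{2}=\frac{p(p-1)}{2}$, since $\{I_\tau,I_K,I_0\}$ is exactly the first partition of $I$ recorded earlier; this single telescoping identity catches any off-by-one error in the summation bounds and so validates all three formulas at once.
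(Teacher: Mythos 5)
Your proof is correct: the gap-counting observation that there are exactly $p-d$ pairs $(k,j)$ with $j-k=d$, followed by summing the arithmetic series over the appropriate ranges of $d$, yields all three formulas, and your consistency check against $|I|=\binom{p}{2}$ does verify (the three counts sum to $\frac{p(p-1)}{2}$). The paper states this lemma without proof as a straightforward computation, and your argument is precisely the intended one.
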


\subsection{Auxiliary variables}
Now we introduce an auxiliary random variable which will be more convenient to handle in the Chen--Stein method.
Let
\begin{equation}\label{vntau}
{V}_{n,\tau}=\underset{1\leqslant k < j \leqslant p,|k-j|\geqslant \tau}{\max}|{}^tX^kX^j|=\underset{\al=(k,j) \in I_0\cup I_{K}}{\max}\left|{}^tX^kX^j\right|
\end{equation}
In the sequel, we will use notation $\alpha=(k,j)$ to denote index into different sets.


\begin{prop}\label{convLnvn}
	Under the assumptions of Theorem theorem \ref{main_result}, we have
	\begin{equation} \frac{n^2L_{n,\tau}^2-V_{n,\tau}^2}{n}\underset{n\rightarrow +\infty}{\overset{\mathbb{P}}{\longrightarrow}}0.
	\end{equation}
\end{prop}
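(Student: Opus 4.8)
The plan is to reduce the comparison of the two maxima to a uniform (over index pairs) comparison of their individual entries. Writing $\alpha=(k,j)$, recall that $n^2\tcoh^2=\max_{\alpha\in I_0\cup I_K}(n\rho_{kj})^2$ and $\tV^2=\max_{\alpha\in I_0\cup I_K}({}^t\X^k\X^j)^2$. Since $|\max_\alpha a_\alpha-\max_\alpha b_\alpha|\le\max_\alpha|a_\alpha-b_\alpha|$, it suffices to prove that
\[
\frac1n\max_{\alpha}\left|(n\rho_{kj})^2-({}^t\X^k\X^j)^2\right|\overset{\mathbb{P}}{\longrightarrow}0.
\]
Factoring the difference of squares and using $\max_\alpha|a_\alpha b_\alpha|\le(\max_\alpha|a_\alpha|)(\max_\alpha|b_\alpha|)$, I would bound the left-hand side by $\tfrac1n\,S\,\Delta$, where $S:=\max_\alpha|n\rho_{kj}+{}^t\X^k\X^j|$ and $\Delta:=\max_\alpha|n\rho_{kj}-{}^t\X^k\X^j|$, and then estimate each factor separately.

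For the sum factor $S$, I would first record the a priori orders $n\tcoh=O_{\mathbb{P}}(\sqrt{n\log p_n})$ and $\tV=O_{\mathbb{P}}(\sqrt{n\log p_n})$; these follow from sub-exponential tail bounds for the (centred) inner products together with a union bound over at most $p_n^2$ pairs, using that $|r_{kj}|\le|\epsilon_n|\to0$ for $\alpha\in I_0\cup I_K$ so that off the central band the columns behave like (nearly) independent standard Gaussians. Hence $S\le n\tcoh+\tV=O_{\mathbb{P}}(\sqrt{n\log p_n})$.

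For the difference factor $\Delta$, set $\widetilde{\X}^k:=\X^k-\overline{\X^k}\mathbf{1}_n$, so that ${}^t\X^k\X^j=\langle\widetilde{\X}^k,\widetilde{\X}^j\rangle+n\,\overline{\X^k}\,\overline{\X^j}$ and $n\rho_{kj}=\langle\widetilde{\X}^k,\widetilde{\X}^j\rangle\cdot\frac{n}{\|\widetilde{\X}^k\|\,\|\widetilde{\X}^j\|}$, whence
\[
n\rho_{kj}-{}^t\X^k\X^j=\langle\widetilde{\X}^k,\widetilde{\X}^j\rangle\left(\frac{n}{\|\widetilde{\X}^k\|\,\|\widetilde{\X}^j\|}-1\right)-n\,\overline{\X^k}\,\overline{\X^j}.
\]
The heart of the argument is two uniform concentration estimates, each obtained by a tail bound plus a union bound over the $p_n$ columns: (i) $\max_k\left|\frac1n\|\widetilde{\X}^k\|^2-1\right|=O_{\mathbb{P}}(\sqrt{\log p_n/n})$ (chi-square concentration, after subtracting the negligible $n(\overline{\X^k})^2$), which yields $\max_\alpha\left|\frac{n}{\|\widetilde{\X}^k\|\,\|\widetilde{\X}^j\|}-1\right|=O_{\mathbb{P}}(\sqrt{\log p_n/n})$; and (ii) $\max_k|\overline{\X^k}|=O_{\mathbb{P}}(\sqrt{\log p_n/n})$ (maximum of $p_n$ centred Gaussians of variance $1/n$). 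Combining (i) with $\max_\alpha|\langle\widetilde{\X}^k,\widetilde{\X}^j\rangle|=O_{\mathbb{P}}(\sqrt{n\log p_n})$ controls the first term by $O_{\mathbb{P}}(\log p_n)$, while (ii) bounds the second by $n(\max_k|\overline{\X^k}|)^2=O_{\mathbb{P}}(\log p_n)$; thus $\Delta=O_{\mathbb{P}}(\log p_n)$.

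Putting the pieces together gives $\frac1n\,S\,\Delta=O_{\mathbb{P}}\!\left((\log p_n)^{3/2}/\sqrt n\right)$, which tends to $0$ precisely because Hyp 1 guarantees $\log p_n=o(n^{1/3})$, i.e. $(\log p_n)^3=o(n)$. I expect the \emph{main obstacle} to be the uniform-in-$\alpha$ concentration bounds, in particular pinning down the correct order $O_{\mathbb{P}}(\sqrt{n\log p_n})$ for $\max_\alpha|{}^t\X^k\X^j|$ and $\max_k|\frac1n\|\widetilde{\X}^k\|^2-1|$: these require care with the sub-exponential (rather than sub-Gaussian) tails of products of Gaussians, and the mild within-band dependence on $I_K$ must be absorbed using $|\epsilon_n|\to0$.
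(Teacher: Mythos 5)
Your proof is correct and follows essentially the same route as the paper: the paper also factors $|n^2L_{n,\tau}^2-V_{n,\tau}^2|\le |nL_{n,\tau}-V_{n,\tau}|\,(|nL_{n,\tau}-V_{n,\tau}|+2V_{n,\tau})$, controls the difference term by exactly your decomposition into the norm-ratio deviation and the empirical-mean term (packaged there as Lemma 2.2 of Jiang, with the tightness of $c_{n,1}\sqrt{n/\log p_n}$ and $c_{n,3}\sqrt{n/\log p_n}$ playing the role of your $O_{\mathbb{P}}(\sqrt{\log p_n/n})$ bounds), and concludes via the same rate $\sqrt{(\log p_n)^3/n}\to 0$ from Hyp 1. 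The only cosmetic difference is that you rederive the deterministic inequality by hand rather than citing it.
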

\pn
The proof of this Proposition is postponed to \cref{proofs}.
\medskip\pn
Hence to study the asymptotic behaviour of $L_{n,\tau}$, it is enough to study the limiting distribution of ${V}_{n,\tau}$. To do so, we use another slightly different random variable defined by:
\begin{equation}
{V}_{n,\tau}'=\underset{\al \in \Lpo \cup \Lpe}{\max}\left(Z_{\al}\right)
\end{equation}
where the index $\al = (k,j)$ and $Z_{\al}=Z_{kj}=\left|{}^tX^kX^j\right|$. The two variables ${V}_{n,\tau}$ and ${V}_{n,\tau}'$ are linked by the following inequalities:

\begin{prop}\label{equiVtau}
	Let 
	\begin{equation}\label{an}
	a_n(y)=\sqrt{4n\log(p_n)-n\log\log(p_n)+ny}\mbox{ with }y\in \R.
	\end{equation}
	We have :
	\begin{equation}
	\fP\left({V}_{n,\tau}'>a_n(y)\right)  \leqslant  \fP\left({V}_{n,\tau}>a_n(y)\right) \leqslant  \fP\left({V}_{n,\tau}'>a_n(y)\right)+ o(1)
	\end{equation}
\end{prop}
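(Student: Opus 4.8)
The plan is to derive both inequalities from the set-theoretic relations between the index families defining $V_{n,\tau}$ and $V_{n,\tau}'$, the only analytic input being pointwise tail estimates for the variables $Z_{\alpha}=\left|{}^tX^kX^j\right|$.

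\textbf{Left inequality.} By the very definitions of the sets one has $\Lpo\subseteq I_0$ and $\Lpe\subseteq I_{K}$, hence $\Lpo\cup\Lpe\subseteq I_0\cup I_{K}$. Since $V_{n,\tau}'$ is the maximum of the $Z_{\alpha}$ over the smaller family and $V_{n,\tau}$ the maximum of the same quantities over the larger one, $V_{n,\tau}'\leq V_{n,\tau}$ surely, so $\{V_{n,\tau}'>a_n(y)\}\subseteq\{V_{n,\tau}>a_n(y)\}$ and the left inequality follows by monotonicity of $\fP$.

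\textbf{Right inequality.} Here I would use the third partition $I_0\cup I_{K}=\Lpe\cup\Lpo\cup[E_{\delta}\cap\overline{I_{\tau}}]$, which gives the pointwise identity
\[
V_{n,\tau}=\max\left(V_{n,\tau}',\ \max_{\alpha\in E_{\delta}\cap\overline{I_{\tau}}}Z_{\alpha}\right),
\]
whence $\{V_{n,\tau}>a_n(y)\}\subseteq\{V_{n,\tau}'>a_n(y)\}\cup\{\max_{\alpha\in E_{\delta}\cap\overline{I_{\tau}}}Z_{\alpha}>a_n(y)\}$, and a union bound yields
\[
\fP\left(V_{n,\tau}>a_n(y)\right)\leq\fP\left(V_{n,\tau}'>a_n(y)\right)+\sum_{\alpha\in E_{\delta}\cap\overline{I_{\tau}}}\fP\left(Z_{\alpha}>a_n(y)\right).
\]
It then remains to show that the remainder sum is $o(1)$.

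\textbf{The remainder term.} I would split $E_{\delta}\cap\overline{I_{\tau}}$ according to whether $\alpha\in I_0$ (then $r_{kj}=0$ and $X^k,X^j$ are independent) or $\alpha\in I_{K}$ (then $r_{kj}=\epsilon_n$). The tail of $Z_{\alpha}$ depends only on the pairwise correlation $r_{kj}$, not on membership in $\Gamma_{p,\delta}$, so one may reuse the large-deviation estimates for $\left|{}^tX^kX^j\right|$ proved in \cref{tech} for the $\Lpo$ and $\Lpe$ families. Each bad index $k\in\Gamma_{p,\delta}$ has fewer than $p$ partners in the independent zone and at most $2(K+1)$ in the $\epsilon_n$-band, so Hyp 3 gives $|E_{\delta}\cap I_0|=O(p\,|\Gamma_{p,\delta}|)$ and $|E_{\delta}\cap I_{K}|=O(K\,|\Gamma_{p,\delta}|)$. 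In the independent case the tail is of order $p^{-2}$, so the $I_0$-part contributes $O(|\Gamma_{p,\delta}|/p)=o(1)$ by Hyp 3. In the correlated case, $\epsilon_n\sim\gamma\sqrt{\log(p_n)/n}$ shifts the mean of ${}^tX^kX^j$ by $n\epsilon_n\sim\gamma\sqrt{n\log p_n}$ and raises the tail to order $p^{-\theta}$ with $\theta=\tfrac12\gamma^2-2|\gamma|+2$; since Hyp 4 ($|\gamma|<2-\sqrt2$) is precisely the statement $\theta>1$, the $I_{K}$-part contributes $O(K\,|\Gamma_{p,\delta}|\,p^{-\theta})=o(p^{\nu+1-\theta})$, which tends to $0$ under Hyp 5 because $\nu<c(\gamma,\delta)\leq\tfrac13(\theta-1)<\theta-1$.

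\textbf{Main obstacle.} The delicate point is the correlated contribution: one must make the large-deviation tail of $\left|{}^tX^kX^j\right|$ precise enough to extract the exponent $\theta=\tfrac12\gamma^2-2|\gamma|+2$ and to check that it dominates the cardinality $K\,|\Gamma_{p,\delta}|$ of the $\epsilon_n$-band bad pairs. This is exactly where the interplay of Hyp 4 and Hyp 5 and the first term of $c(\gamma,\delta)$ enters; the independent contribution, by contrast, is a routine consequence of Hyp 3 and the standard coherence tail bound.
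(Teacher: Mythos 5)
Your proposal is correct and follows essentially the same route as the paper: the left inequality from $\Lpo\cup\Lpe\subseteq I_0\cup I_K$, the right one via the partition $I_0\cup I_K=\Lpo\cup\Lpe\cup\bigl[E_{\delta}\cap\overline{I_{\tau}}\bigr]$ and a union bound, with the remainder controlled by the tail estimates of Lemmas \ref{probI0} and \ref{probIk} together with Hyp 3--5. The only cosmetic difference is that you bound the correlated bad pairs by $O(K\,|\Gamma_{p,\delta}|)$ where the paper uses the cruder $|I_K|\sim Kp_n$; both suffice under $\nu<c_{\gamma}-1$.
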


\begin{proof}
	(For seek of simplicity, we will denote $a_n(y)$ by $a_n$ in the sequel).
	\\
	To proove this result, we need the two following technical results whose proofs are postponed to \cref{proofs}.

	\begin{lem}\label{probI0}
		Let $a_n$ be as in formula \eqref{an}. Then, 
		\begin{equation} 
		\fPo := \fP\left(\left| {}^tX^1X^{\tau+ K +2}\right|>a_n\right)=\frac{1}{\sqrt{2\pi}}e^{-\frac{y}{2}}\frac{1}{p_n^2}\left(1+o(1)\right)=O_{n\rightarrow +\infty}\left(\frac{1}{p_n^2}\right)
		\end{equation} 
	\end{lem}
	
	\begin{lem}\label{probIk}
		Let $a_n$ be as in formula \eqref{an} and let us define $c_{\gamma} := \frac{1}{2}\gamma^2-2|\gamma|+2$ with $\gamma$ defined in \cref{hyp3}. Then, for any $d \in [0;c_{\gamma}[$ and $n \rightarrow +\infty$ :
		\begin{equation} 
		\fPk:= \fP\left(\left| {}^tX^1X^{\tau+1}\right|>a_n\right)=o\left(p_n^{-d}\right)
		\end{equation} 
	\end{lem}

	According to the partition $I_0 \cup I_K = \Lpo \cup \Lpe \cup \left(E_{\delta}\cap \overline{I_{\tau}}\right)$,
	\begin{eqnarray*}
		\fP\left(V_{n,\tau}>a_n\right)  & = & \fP\left(\underset{\al=(k,j) \in I_0\cup I_{K}}{\max}\left|{}^tX^kX^j\right|>a_n\right)\\		
		& \leqslant & \fP\left(V_{n,\tau}'>a_n\right)+\fP\left(\underset{\al=(k,j) \in E_{\delta}\cap \overline{I_{\tau}}}{\max}\left|{}^tX^kX^j\right|>a_n\right)\\
		& \leqslant & \fP\left(V_{n,\tau}'>a_n\right)+\sum\limits_{\al=(k,j) \in E_{\delta}\cap \overline{I_{\tau}}}\fP\left(\left|{}^tX^kX^j\right|>a_n\right)\\
		& \leqslant & \fP\left(V_{n,\tau}'>a_n\right)+\sum\limits_{\al\in \left[E_{\delta}\cap \overline{I_{\tau}}\right]\cap I_{K}}\fP\left(Z_{\al}>a_n\right)+\sum\limits_{\al\in \left[E_{\delta}\cap \overline{I_{\tau}}\right]\cap I_{0}}\fP\left(Z_{\al}>a_n\right).
	\end{eqnarray*}	
	All variables $Z_{\alpha}$ having same distributions in the different sets above, we have		
	\begin{eqnarray*}
		\fP\left(V_{n,\tau}>a_n\right) 		
		& {\leqslant} & \fP\left(V_{n,\tau}'>a_n\right)+\left| \left[E_{\delta}\cap \overline{I_{\tau}}\right]\cap I_{K}\right|\fP\left(Z_{1,\tau+1}>a_n\right)+\left|\left[E_{\delta}\cap \overline{I_{\tau}}\right]\cap I_{0}\right|\fP\left(Z_{1,\tau+K+2}>a_n\right)\\
		& \leqslant & \fP\left(V_{n,\tau}'>a_n\right)+\left|I_K\right|\fPk+\left|E_{\delta}\right|\fPo.\\
	\end{eqnarray*}
	We can use the following straightforward result :
	\begin{lem}
		\begin{equation*}
		|E_{\delta}| \leqslant 2p_n|\Gamma_{p,\delta}|
		\end{equation*}
	\end{lem}
	Hence from assumption 3 of theorem \ref{main_result}, we have : 
	\begin{equation}\label{cardEdelta0}
	\left|E_{\delta}\right| = o(p_n^2)
	\end{equation}
	Now, we need to prove that $\left|I_K\right|\fPk+\left|E_{\delta}\right|\fPo \ninf 0$. First, from lemma \ref{probI0} and \eqref{cardEdelta0},
	\begin{eqnarray*}
		\left|E_{\delta}\right|\fP\left(Z_{1,\tau+K+2}>a_n\right) &{\underset{n\rightarrow +\infty}{\sim}} & \left|E_{\delta}\right|\frac{1}{\sqrt{2\pi}}e^{-\frac{y}{2}}\frac{1}{p_n^2}\\
		&{\underset{n\rightarrow +\infty}{=}} & o(p_n^2)\frac{1}{\sqrt{2\pi}}e^{-\frac{y}{2}}\frac{1}{p_n^2}\\
		& = & \frac{1}{\sqrt{2\pi}}e^{-\frac{y}{2}}o(1) \ninf 0.\\
	\end{eqnarray*}
	Secondly, using lemma \ref{card0} (more precisely \cref{cardIepsi}) and lemma \ref{probIk}  we have :
	\begin{eqnarray}
	\left|I_K\right|\fPk \ninf 0 & \Leftrightarrow & \nu < c_{\gamma} - 1\\
	\end{eqnarray}
	and this is fullfilled from assumptions on theorem \ref{hyp5}. Finally, we obtain : 
	\begin{equation}
	\left|I_K\right|\fPk+\left|E_{\delta}\right|\fPo \ninf 0.
	\end{equation}
	Then,
	\begin{equation}
	\fP\left(V_{n,\tau}>a_n\right)  \leqslant  \fP\left(V_{n,\tau}'>a_n\right)+ o(1)
	\end{equation}
	Also, it is easy to see that :
	\begin{equation}
	\fP\left(V_{n,\tau}'>a_n\right)  \leqslant  \fP\left(V_{n,\tau}>a_n\right).
	\end{equation}
\end{proof}
\begin{rem}\label{rem1}
	The main constraint  so far is
	$p_nK\fPk\to 0\mbox{ when }n\to \infty$
	which leads to
	\[\nu<\frac{1}{2}\gamma^2-2|\gamma|+1\,.\]
Moreover, it also implies the following condition:
	\[\gamma\in[-2,2]\mbox{ is such that } \frac{1}{2}\gamma^2-2|\gamma|+1>0\iff \gamma\in]2+\sqrt 2;2-\sqrt 2[\]
\end{rem}
\subsection{Chen--Stein method for ${V}_{n,\tau}'$}\label{chenStenVntauprime}

We focus now on the asymptotic behaviour of ${V}_{n,\tau}'$. For that purpose, we apply the Chen-Stein method. We remind this result, which can be found in \cite{MR972770}, in the following lemma: 

\begin{lem}{The Chen-Stein Method}\\
	Let $\cal I$ be a set of indices. Let $\alpha \in{\cal  I}$ and $B_{\alpha}$ a set of subset of $\cal I$ (i.e. for all $\alpha$, $B_{\alpha} \subset {\cal I}$). Let $\eta_{\alpha}$ be random variables. For a given $t \in \R$, we define $\lambda := \sum\limits_{\alpha \in {\cal I}}\fP\left(\eta_{\alpha}>t\right)$. Then, 
	\begin{equation} \left|\fP\left(\underset{\alpha\in {\cal I}}{\max}\left(\eta_{\alpha}\right)\leqslant t\right)-e^{-\lambda}\right|\leqslant \min\left(1,\frac{1}{\lambda}\right). \left(b_1+b_2+b_3\right),
	\end{equation}
	where 
	\begin{itemize}
		\item $b_1=\sum\limits_{\alpha\in {\cal I}}\sum\limits_{\beta\in B_{\alpha}}\fP(\eta_{\alpha}>t)\fP\left(\eta_{\beta}>t\right)$
		\item $b_2=\sum\limits_{\alpha\in {\cal I}}\sum\limits_{\alpha \neq\beta\in B_{\alpha}}\fP\left(\eta_{\alpha}>t,\eta_{\beta}>t\right)$
		\item $b_3=\sum\limits_{\alpha\in {\cal I}}\E\left[\left|\E[\mathbf{1}_{\eta_{\alpha}>t}|\sigma(\eta_{\beta},\beta \in {\cal I}\backslash B_{\alpha})]-\E[\mathbf{1}_{\eta_{\alpha}>t}]\right|\right]$.
	\end{itemize}
\end{lem}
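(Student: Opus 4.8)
The plan is to recast the statement as a Poisson approximation and then run Stein's method. Setting $X_\alpha = \mathbf{1}_{\eta_\alpha > t}$ and $W = \sum_{\alpha \in \mathcal{I}} X_\alpha$, the event $\{\max_\alpha \eta_\alpha \le t\}$ is exactly $\{W = 0\}$, while $\lambda = \sum_\alpha \fP(\eta_\alpha > t) = \E[W]$ and $e^{-\lambda} = \fP(\mathrm{Poisson}(\lambda) = 0)$. Thus the inequality to prove is a one-point Poisson-approximation estimate $|\fP(W=0) - e^{-\lambda}|$ for the weakly dependent sum $W$, where the dependence structure is encoded by the neighbourhoods $B_\alpha$ (with the convention $\alpha \in B_\alpha$, consistent with the summation ranges in $b_1,b_2,b_3$).

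First I would introduce the Stein--Chen equation for the Poisson law. For the test function $h = \mathbf{1}_{\{0\}}$ I would solve $\lambda g(k+1) - k g(k) = \mathbf{1}_{\{k=0\}} - e^{-\lambda}$ with $g(0)=0$, whose unique bounded solution $g$ is explicit. The two ``Stein factors'' I need are the uniform bounds $\|g\|_\infty \le \min(1,1/\lambda)$ and $\sup_k |g(k+1)-g(k)| \le \min(1,1/\lambda)$. Evaluating at $W$ and taking expectations then yields the identity $\fP(W=0) - e^{-\lambda} = \E[\lambda g(W+1) - W g(W)]$, which turns the problem into controlling the right-hand side.

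The core of the argument is to exploit local dependence. Writing $\lambda g(W+1) - W g(W) = \sum_\alpha \big(p_\alpha g(W+1) - X_\alpha g(W)\big)$ with $p_\alpha = \E[X_\alpha]$, and splitting $W = W_\alpha + Z_\alpha$ where $Z_\alpha = \sum_{\beta \in B_\alpha} X_\beta$ is the local contribution and $W_\alpha = \sum_{\beta \notin B_\alpha} X_\beta$ is $\mathcal{F}_\alpha$-measurable for $\mathcal{F}_\alpha = \sigma(\eta_\beta : \beta \in \mathcal{I} \setminus B_\alpha)$, each summand decomposes into three pieces:
\[
p_\alpha \E[g(W+1) - g(W_\alpha+1)] + \E[X_\alpha(g(W_\alpha+1) - g(W))] + \E[(p_\alpha - X_\alpha)g(W_\alpha+1)].
\]
For the first two I would apply the Lipschitz bound together with $|W - W_\alpha| = Z_\alpha$ and $X_\alpha(Z_\alpha - 1) = X_\alpha \sum_{\beta \in B_\alpha,\, \beta \neq \alpha} X_\beta$; summing over $\alpha$ produces exactly $\min(1,1/\lambda)\, b_1$ and $\min(1,1/\lambda)\, b_2$. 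For the third I would condition on $\mathcal{F}_\alpha$, so that $\E[(p_\alpha - X_\alpha)g(W_\alpha+1)] = \E[g(W_\alpha+1)(\E[X_\alpha] - \E[X_\alpha\mid \mathcal{F}_\alpha])]$, and bound it by $\|g\|_\infty\, \E|\E[X_\alpha \mid \mathcal{F}_\alpha] - \E[X_\alpha]|$; summing gives $\min(1,1/\lambda)\, b_3$. Adding the three contributions yields the claimed bound.

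I expect the main obstacle to be the two Stein-factor estimates $\|g\|_\infty,\ \sup_k|g(k+1)-g(k)| \le \min(1,1/\lambda)$, which require analysing the explicit solution of the difference equation (monotonicity of $k \mapsto g(k)$ and a summation-by-parts/telescoping argument); by contrast, the decomposition into $b_1,b_2,b_3$ is routine bookkeeping once the neighbourhood convention is fixed. Since this is precisely the Arratia--Goldstein--Gordon formulation of \cite{MR972770}, I would in practice simply invoke their estimate, but the sketch above is the self-contained route.
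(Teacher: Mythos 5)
Your proposal is correct: the paper gives no proof of this lemma at all, simply quoting it from \cite{MR972770}, so your closing remark that one would in practice invoke the Arratia--Goldstein--Gordon estimate is exactly what the authors do. Your sketch of the underlying argument (Stein equation for the test function $\mathbf{1}_{\{0\}}$, the Stein-factor bounds by $\min(1,1/\lambda)$, and the local decomposition $W=W_\alpha+Z_\alpha$ producing $b_1$, $b_2$, $b_3$) is the standard proof from that reference and is sound, including the convention $\alpha\in B_\alpha$ needed to make the $b_2$ summation range consistent.
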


As we said, this method is an approximation of weekly dependent events by a Poisson law which is represented by the quantity $e^{-\lambda}$ ( corresponding to $\fP\left(Z=0\right)$, $Z$ having a Poisson law $\mathcal{P}\left(\lambda\right)$). We need to find weekly dependent events to have $b_1$, $b_2$ and $b_3$ small (even null or asymptotically null).\\
\\
In our case, notations are :
\begin{itemize}
	\item $\Lambda=\Lpo \cup \Lpe$.
	\item $\al = (k,j) \in \Lambda$.
	\item $B_{\al}=B_{kj}=\{(u,v)\in \Lambda : |k-u|< \tau +K, |j-v| < \tau +K \text{ and } (k,j)\neq (u,v)\}$.
	\item $\eta_{\alpha}=Z_{\alpha}=Z_{kj}=\left|{}^tX^kX^j\right|=\left|\sum\limits_{i=1}^{n}X_i^kX_i^j\right|$.
	\item $\lambda_n=\sum\limits_{\alpha \in \Lambda}\fP(Z_{\al}>a_n)$.
	\item $b_{1,n}=\sum\limits_{\alpha\in \Lambda}\sum\limits_{\beta\in B_{\al}}\fP(Z_{\al}>a_n)\fP(Z_{\al}>a_n).$
	\item $b_{2,n}=\sum\limits_{\al \in \Lambda}\sum\limits_{\al \neq \be \in B_{\al}}\fP(Z_{\al}>a_n,Z_{\be}>a_n)$.
	\item $b_{3,n}=\sum\limits_{\alpha \in \Lambda}\E\left[\left|\E\left[\mathbf{1}_{Z_{\alpha}>a_n} | \sigma\left(Z_{\beta}, \beta \in \Lambda\backslash B_{\alpha}\right)\right]-\E\left[\mathbf{1}_{Z_{\alpha}>a_n}\right]\right|\right].$
\end{itemize}

First of all, we compute $\lambda_n$ to assure it converges (as $n \rightarrow +\infty$) to a finite value. Then, we compute $b_{1,n}$, $b_{2,n}$ and $b_{3,n}$. Let us start with a preliminary lemma.

\begin{lem}\label{card}
	Considering the previous notations, with straightforward computations we obtain the following results :
	\begin{itemize}
		\item $|\Lpo| \sim p_n^2/2$ as $n \rightarrow +\infty$
		\item $|B_{ij}| \leqslant 8(\tau + K)p_n \sim 8Kp_n$ as $n \rightarrow +\infty$
		\item $|\Lpe| \leqslant |I_K|$
	\end{itemize}
\end{lem}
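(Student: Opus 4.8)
The plan is to derive all three estimates from two elementary observations: first, that $\Lpo$ and $\Lpe$ are exactly $I_0$ and $I_K$ with the ``large correlation'' pairs removed, and second, that the number of those removed pairs is negligible by Hypothesis~3. Indeed, unwinding the definitions, a pair $(k,j)\in I$ lies outside $E_\delta$ precisely when neither $k$ nor $j$ belongs to $\Gamma_{p,\delta}$, i.e. when $\max_{1\le k\ne q,\,j\ne q\le p}(|r_{kq}|,|r_{jq}|)\le 1-\delta$. Hence $\Lpo = I_0\setminus E_\delta$ and $\Lpe = I_K\setminus E_\delta$, which already yields the third estimate $|\Lpe|=|I_K|-|I_K\cap E_\delta|\le |I_K|$ for free.

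For the first estimate I would write $|\Lpo|=|I_0|-|I_0\cap E_\delta|$ and control the two terms separately. By \eqref{cardI0}, $|I_0|=\tfrac12(p_n-\tau-K-1)(p_n-\tau-K)$; since $\tau=o(p_n)$ by Hypothesis~2 and $K=O(p_n^\nu)$ with $\nu<1$ by Hypothesis~5, we have $(\tau+K)/p_n\to 0$ and therefore $|I_0|\sim p_n^2/2$. For the correction term, $0\le |I_0\cap E_\delta|\le |E_\delta|=o(p_n^2)$ by \eqref{cardEdelta0}. Combining, $|\Lpo|=\tfrac{p_n^2}{2}(1+o(1))-o(p_n^2)\sim p_n^2/2$, as claimed.

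For the second estimate I would bound $|B_{kj}|$ by the number of pairs $(u,v)\in I$ whose index set $\{u,v\}$ meets the $(\tau+K)$-neighbourhood of $\{k,j\}$; the set $B_{kj}$ is contained in this ``overlap'' set (this is also the neighbourhood one really needs later to make $b_{3,n}$ negligible, since $Z_{kj}$ and $Z_{uv}$ become independent as soon as all four cross-distances exceed $\tau+K$). For each of the four conditions $|k-u|<\tau+K$, $|k-v|<\tau+K$, $|j-u|<\tau+K$, $|j-v|<\tau+K$, the constrained index ranges over at most $2(\tau+K)$ integers and the free index over at most $p_n$, giving at most $2(\tau+K)p_n$ pairs per condition; a union bound over the four conditions yields $|B_{kj}|\le 8(\tau+K)p_n$. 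The equivalence $8(\tau+K)p_n\sim 8Kp_n$ then follows provided $\tau=o(K)$, which holds when $K$ is of genuine polynomial order, $\tau$ being sub-polynomial by Hypothesis~2.

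None of these steps is hard — the lemma itself advertises ``straightforward computations'' — so I expect no serious obstacle. The only points requiring a little care are invoking $|E_\delta|=o(p_n^2)$ to upgrade $|I_0|\sim p_n^2/2$ into $|\Lpo|\sim p_n^2/2$ (rather than merely $|\Lpo|\le|I_0|$), and getting the constant in the $B_{kj}$ bound right: one must count all four overlap conditions rather than only the pair written literally in the definition of $B_{kj}$, which is the step most prone to an off-by-a-factor slip.
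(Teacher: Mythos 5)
Your proof is correct and is precisely the ``straightforward computation'' the paper omits: the identifications $\Lpo=I_0\setminus E_\delta$ and $\Lpe=I_K\setminus E_\delta$, combined with \eqref{cardI0} and $|E_\delta|\le 2p_n|\Gamma_{p,\delta}|=o(p_n^2)$, give the first and third bullets, and the neighbourhood count gives the second. Your two caveats are both well taken: the enlarged four-condition overlap set is indeed what the $b_{3,n}$ argument implicitly requires of $B_{\alpha}$ (the two conditions written literally in the definition would only give the smaller bound $4(\tau+K)^2$ and would not make the first term of $b_{3,n}$ vanish), and the equivalence $8(\tau+K)p_n\sim 8Kp_n$ genuinely needs $\tau=o(K)$, which Hyp~2 and Hyp~5 do not by themselves guarantee --- harmless here, since only the upper bound of order $(\tau+K)p_n$ is ever used downstream.
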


\subsubsection{Computation of $\lambda_n$}

According to the Chen-Stein method and using the fact that random variables have the same law when indices are in the same set, we have

\begin{eqnarray*}
	\lambda_n & = & \sum\limits_{\alpha \in \Lpo \cup \Lpe}\fP\left(Z_{\alpha}>a_n\right) = \sum\limits_{\alpha \in \Lpo}\fP\left(Z_{\alpha}> a_n\right)+\sum\limits_{\alpha \in \Lpe}\fP\left(Z_{\alpha}>a_n\right){=} |\Lpo|.\fPo+ |\Lpe|.\fPk\\
\end{eqnarray*}
According to theorem \ref{hyp5}, lemma \ref{probIk} and lemma \ref{card}, we have 
$$\underset{n\rightarrow +\infty}{\lim}|\Lpe|.\fPk =0$$ 
while, according to lemma \ref{probI0} and lemma \ref{card}, $$|\Lpo|.\fPo \underset{+\infty}{\sim} p_n^2 \frac{1}{p_n^2}\frac{1}{\sqrt{8\pi}}e^{y/2}=\frac{1}{\sqrt{8\pi}}e^{y/2}.$$ 
Finally, we obtain : 
\begin{equation}
\liminfini\left(\lambda_n\right)=\frac{1}{\sqrt{8\pi}}e^{y/2}
\end{equation}
This quantity appears in the distribution function of the asymptotic Gumbel random variable. 

\subsubsection{Computation of $b_{1,n}$}

We add some notations :
\begin{itemize}
	\item $B_{\alpha}^0 := B_{\alpha} \cap \Lpo$ and $|\Balo| \leqslant |B_{\alpha}| \leqslant 8(\tau+K)p_n$
	\item $B_{\alpha}^K := B_{\alpha} \cap \Lpe$ and $|\BalK|\leqslant K^2$
	\item $\fPal := \fP\left(Z_{\alpha} > a_n\right)$
\end{itemize}
As used above, $Z_{\alpha_1}$ and $Z_{\alpha_2}$ will have the same law as long as $\alpha_1$ and $\alpha_2$ belong to the same set. Then, we have : 
\begin{eqnarray*}
	b_{1,n} & = & \sum\limits_{\alpha \in \Lpo \cup \Lpe}\sum\limits_{\beta \in B_{\alpha}}\fPal\fPb\\
	& = & \sum\limits_{\alpha \in \Lpo}\sum\limits_{\beta \in \Balo}\fPal\fPb+\sum\limits_{\alpha \in \Lpo}\sum\limits_{\beta \in \BalK}\fPal\fPb+\sum\limits_{\alpha \in \Lpe}\sum\limits_{\beta \in \Balo}\fPal\fPb+\sum\limits_{\alpha \in \Lpe}\sum\limits_{\beta \in \BalK}\fPal\fPb\\
	& = & \sum\limits_{\alpha \in \Lpo}\sum\limits_{\beta \in \Balo} \left(\fPo\right)^2+\sum\limits_{\alpha \in \Lpo}\sum\limits_{\beta \in \BalK}\fPo\fPk+\sum\limits_{\alpha \in \Lpe}\sum\limits_{\beta \in \Balo}\fPk\fPo+\sum\limits_{\alpha \in \Lpe}\sum\limits_{\beta \in \BalK}\left(\fPk\right)^2\\
	& = & |\Lpo|.|\Balo|.\left(\fPo\right)^2+|\Lpo|.|\BalK|\fPo\fPk+|\Lpe|.|\Balo|\fPk\fPo+|\Lpe|.|\BalK|.\left(\fPk\right)^2
\end{eqnarray*}
At this point, we need to check that $\liminfini\left(b_{1,n}\right)=0$, so we focus particulary on :
\begin{enumerate}
	\item $|\Lpo|.|\Balo|.\left(\fPo\right)^2$ :
	\begin{eqnarray}
	|\Lpo|.|\Balo|.\left(\fPo\right)^2 & \sim & \frac{1}{2}p_n^2.|\Balo|.\left(\fPo\right)^2 \leqslant 4(\tau+K)p_n^3.O\left(\frac{1}{p_n^4}\right)=O\left(p_n^{\nu - 1}\right)
	\end{eqnarray}
	From assumptions on $\nu$ we have $\liminfini\left[|\Lpo|.|\Balo|.\left(\fPo\right)^2 \right]=0$
	\item $|\Lpo|.|\BalK|\fPo\fPk$ :
	\begin{eqnarray*}
		|\Lpo|.|\BalK|\fPo\fPk & \sim & \frac{1}{2}p_n^2 |\BalK|\fPo\fPk \leqslant \frac{1}{2}K^2p_n^2 \fPo\fPk \leqslant O\left(p_n^{2+2\nu}\right)O\left(p_n^{-2}\right)\fPk =  O\left(p_n^{2\nu}\fPk\right)
	\end{eqnarray*}
	According to lemma \ref{probIk}, we will have $\liminfini \left[p_n^{2\nu}\fPk \right]=0$ iff $2\nu < c_{\gamma}$ which is true from hypothesis $5$ in theorem \ref{main_result}. Then, we obtain :
	$$\liminfini\left[|\Lpo|.|\BalK|\fPo\fPk\right]=0$$
	\item $|\Lpe|.|\Balo|\fPk\fPo$ :
	We use the same principle of computation than previsouly :
	\begin{eqnarray}
	|\Lpe|.|\Balo|\fPk\fPo & \leqslant & p_nK|\Balo|\fPk\fPo \leqslant 8p_nK(\tau + K)p_n\fPk\fPo = O\left(p_n^{2\nu}\fPk\right)
	\end{eqnarray}
	So, from previous assumptions on $\nu$, we have :
	$$\liminfini\left[|\Lpe|.|\Balo|\fPk\fPo\right]=0\,.$$
	\item $|\Lpe|.|\BalK|.\left(\fPk\right)^2$ :
	We have :
	\begin{eqnarray}
	|\Lpe|.|\BalK|.\left(\fPk\right)^2 & \leqslant & pK^3\left(\fPk\right)^2 = O\left(p_n^{1+3\nu}\right)\left(\fPk\right)^2
	\end{eqnarray}
	According to lemma \ref{probIk}, if $1+3\nu < 2c_{\gamma}$, we have 
	$$\liminfini\left[|\Lpe|.|\BalK|.\left(\fPk\right)^2\right]=0\,.$$
\end{enumerate}
To conclude, we finally obtain :
$$\liminfini\left[b_{1,n}\right]=0\,.$$
\begin{rem}\label{rem}
	The main constraint here is $p_nK^3\left(\fPk\right)^2\to 0$ which is true from condition $p_nK\fPk\to 0$ of remark \ref{rem1}.
\end{rem}

\subsubsection{Computation of $b_{2,n}$}

The computation of $b_{2,n}$ is the most technical part. As we did for the computation of $b_{1,n}$, we will divide this computation into four parts (according on which set we are). We remind the definition of $b_{2,n}$ :
$$b_{2,n}=\sum\limits_{\alpha\in \Lpo \cup \Lpe}\sum\limits_{\beta\in B_{\alpha}}\fP\left(Z_{\alpha}>a_n, Z_{\beta}>a_n\right)\,.$$
Here we introduce some new notations :
\begin{itemize}
	\item $\fP_{\alpha\beta} := \fP\left(Z_{\alpha}>a_n,Z_{\beta}>a_n\right)$
	\item $\fP_{0i} := \fP_{\alpha\beta}\mathbf{1}_{\alpha \in \Lpo}\mathbf{1}_{\beta \in \Omega_i}$ where $\Omega_i$ will be a subset of indices and $i$ an integer.
	\item $\fP_{Ki} := \fP_{\alpha\beta}\mathbf{1}_{\alpha \in \Lpe}\mathbf{1}_{\beta \in \Omega_i}$ where $\Omega_i$ will be a subset of indices and $i$ an integer.
\end{itemize}
To show that $\liminfini[b_{2,n}]=0$, we will divide it into four sums, each one being the sum of the same probability on a given set of indices. Then, we have : 

\begin{equation}
b_{2,n}=
\underbrace{\sum\limits_{\alpha\in \Lpo}\sum\limits_{\beta\in \Balo}\fP_{\alpha\beta}}_{:=Q_1}
+ \underbrace{\sum\limits_{\alpha\in \Lpo}\sum\limits_{\beta\in \BalK}\fP_{\alpha\beta}}_{:=Q_2}
+ \underbrace{\sum\limits_{\alpha\in \Lpe}\sum\limits_{\beta\in \Balo}\fP_{\alpha\beta}}_{:=Q_3}
+ \underbrace{\sum\limits_{\alpha\in \Lpe}\sum\limits_{\beta\in \BalK}\fP_{\alpha\beta}}_{:=Q_4}
\end{equation}

\underline{\textbf{Computation of $Q_1$ :}}\label{calculQ1}
\newline

First, we define some additional subsets of indices. In particular, we have : 
\begin{enumerate}
	\item $\Omega_1 := \{(u,v)\in\Lpo : i-u < \tau \text{  and  } j-v < \tau \}$ and $|\Omega_1| \leqslant \tau^2$
	\item $\Omega_2 := \{(u,v)\in\Lpo : i-u < \tau \text{  and  } \tau < j-v < \tau +K \}$ and $|\Omega_2| \leqslant \tau K$
	\item $\Omega_3 := \{(u,v)\in\Lpo :  \tau < i-u < \tau + K \text{  and  } j-v < \tau \}$ and $|\Omega_3| \leqslant \tau K$
	\item $\Omega_4 := \{(u,v)\in\Lpo : i-u < \tau \text{  and  } \tau + K \leqslant j-v \}$ and $|\Omega_4| \leqslant \tau\left(p_n-\tau-K\right) \leqslant \tau p_n$
	\item $\Omega_5 := \{(u,v)\in\Lpo : \tau + K \leqslant i-u \text{  and  } j-v < \tau \}$ and $|\Omega_5| \leqslant \tau\left(p_n-\tau-K\right) \leqslant \tau p_n$
	\item $\Omega_6 := \{(u,v)\in\Lpo : \tau < i-u < \tau + K \text{  and  } \tau < j-v < \tau +K \}$ and $|\Omega_6| \leqslant K^2$
	\item $\Omega_7 := \{(u,v)\in\Lpo : \tau < i-u < \tau+ K <\text{  and  } \tau + K \leqslant j-v \}$\\ and $|\Omega_7| \leqslant K \left(p_n-\tau-K\right) \leqslant Kp_n$
	\item $\Omega_8 := \{(u,v)\in\Lpo : \tau + K \leqslant i-u \text{  and  } j-v < \tau \}$ and $|\Omega_8| \leqslant K \left(p_n-\tau-K\right) \leqslant Kp_n$
\end{enumerate}
We have  : 
\begin{equation}
Q_1 \leqslant 4\sum\limits_{i=1}^{8}\sum\limits_{\alpha\in\Lpo}\sum\limits_{\beta\in\Omega_i}\fP_{\alpha\beta}
\end{equation}
Then, using the fact that on each given subset the random variables have the same law : 

\begin{eqnarray}
Q_1 & \leqslant &  
\left|\Lpo\right|.\left|\Omega_1\right|\fP_{01}+
\left|\Lpo\right|.\left|\Omega_2\right|\fP_{02}+
\left|\Lpo\right|.\left|\Omega_3\right|\fP_{03}+
\left|\Lpo\right|.\left|\Omega_4\right|\fP_{04}\\
& & +
\left|\Lpo\right|.\left|\Omega_5\right|\fP_{05}+
\left|\Lpo\right|.\left|\Omega_6\right|\fP_{06}+
\left|\Lpo\right|.\left|\Omega_7\right|\fP_{07}+
\left|\Lpo\right|.\left|\Omega_8\right|\fP_{08}
\end{eqnarray}

So, we just have to show that each part will have a null limit when $n$ is going to infinity.

\begin{lem}\label{cas11}
	Using the previous notations, we have, as $n \rightarrow +\infty$ :
	\begin{eqnarray}
	\left|\Lpo\right|.\left|\Omega_1\right|\fP_{01}\rightarrow 0
	\end{eqnarray} 
\end{lem}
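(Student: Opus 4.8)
The plan is to bound the three factors $|\Lpo|$, $|\Omega_1|$ and $\fP_{01}$ separately and show their product vanishes. By Lemma \ref{card} we have $|\Lpo|\sim p_n^2/2$, and the definition of $\Omega_1$ gives $|\Omega_1|\leq\tau^2$; hence it suffices to control the common joint tail $\fP_{01}=\fP\!\left(Z_\alpha>a_n,\,Z_\beta>a_n\right)$ for $\alpha=(i,j)\in\Lpo$ and $\beta=(u,v)\in\Omega_1$, and to show that $p_n^2\tau^2\,\fP_{01}\to 0$. Since $\tau=o(p_n^t)$ for every $t>0$ (Hypothesis 2 of Theorem \ref{main_result}), it will be enough to establish a bound of the form $\fP_{01}=O\!\left(p_n^{-4/(2-\delta)}\right)$: indeed $4/(2-\delta)>2$, so $2-4/(2-\delta)=-2\delta/(2-\delta)<0$ leaves a strictly negative surplus exponent that absorbs the factor $\tau^2$.

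The first step is to read off the covariance structure of the four columns $X^i,X^j,X^u,X^v$ from the band model together with the constraints defining $\Omega_1$. As $\alpha,\beta\in\Lpo$ we have $|i-j|>\tau+K$ and $|u-v|>\tau+K$, so $X^i\perp X^j$ and $X^u\perp X^v$; the constraints $i-u<\tau$ and $j-v<\tau$ place $i,u$ in one index block and $j,v$ in a far-away block, so that the cross distances $|i-v|$ and $|j-u|$ both exceed $\tau+K$ and the corresponding correlations vanish. Thus, per sample, the Gaussian vector $(X^i_\ell,X^u_\ell,X^j_\ell,X^v_\ell)$ is block diagonal: the pair $(X^i,X^u)$, with correlation $r_{iu}$, is independent of $(X^j,X^v)$, with correlation $r_{jv}$. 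Membership in $\Lpo$ supplies the crucial bounds $|r_{iu}|\leq 1-\delta$ and $|r_{jv}|\leq 1-\delta$, obtained by choosing $q=u$ and $q=v$ in the definition of $\Lpo$ (the degenerate cases where a column index is shared, e.g. $i=u$ or $j=v$, are handled identically, the conditional correlation then reducing to a single factor bounded by $1-\delta$).

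I would then condition on the block $(X^j,X^v)$. Conditionally, $\left({}^tX^iX^j,\,{}^tX^uX^v\right)$ is a centred bivariate Gaussian with variances $\|X^j\|^2$ and $\|X^v\|^2$ and covariance $r_{iu}\,{}^tX^jX^v$; its conditional correlation equals $r_{iu}\,\hat\rho_{jv}$, whose modulus is at most $|r_{iu}|\leq 1-\delta$ \emph{deterministically}, since $|\hat\rho_{jv}|\leq 1$ by Cauchy--Schwarz. On the high-probability event $\{\|X^j\|^2\leq(1+\eps)n,\ \|X^v\|^2\leq(1+\eps)n\}$, whose complement has exponentially small $\chi^2$ probability (negligible against $p_n^2\tau^2$), a standard tail estimate for a bivariate Gaussian with correlation bounded away from $1$ yields a constant $C$ with
\[
\fP_{01}\;\leq\;\frac{C}{\log p_n}\,\exp\!\left(-\frac{a_n^2}{(1+\eps)\,n\,(2-\delta)}\right).
\]
Inserting $a_n^2\sim 4n\log p_n$ gives $\fP_{01}=O\!\left(p_n^{-4/((1+\eps)(2-\delta))}\right)$, and for $\eps$ small enough the exponent still exceeds $2$, which is exactly the decay targeted above.

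Combining the three estimates gives $|\Lpo|\,|\Omega_1|\,\fP_{01}=O\!\left(\tau^2\,p_n^{\,2-4/(2-\delta)}\right)=O\!\left(\tau^2\,p_n^{-2\delta/(2-\delta)}\right)$, and choosing $t<\delta/(2-\delta)$ in Hypothesis 2 forces this to $0$. The main obstacle is the joint-tail estimate of the third paragraph: quantifying the bivariate Gaussian tail with correlation uniformly bounded by $1-\delta$, carried out conditionally and then integrated while controlling the fluctuations of the empirical norms $\|X^j\|^2$ and $\|X^v\|^2$ that enter the conditional variance, and verifying that the shared-column sub-cases obey the same $(1-\delta)$ correlation bound.
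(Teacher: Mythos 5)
Your proof is correct, and it reaches the conclusion by a genuinely different and more self-contained route than the paper. The paper, after the same reduction to $p_n^{2+2t}\fP_{01}$ via $|\Lpo|\sim p_n^2/2$, $|\Omega_1|\leq\tau^2$ and Hyp 2, treats the joint tail as a black box: it writes down the $4\times 4$ covariance $\Sigma_4$ (the same two-independent-correlated-pairs structure you identify, with the same implicit assumption that the cross-correlations $r_{iv},r_{ju}$ vanish) and invokes Lemma 6.11 of Cai--Jiang to get $\fP_{01}\leq O(p_n^{-2b^2+\varepsilon_1})+O(p_n^{-2-2c^2+\varepsilon_2})$ with $b=\frac{1+(1-\delta)^2}{2(1-\delta)^2}>1$, then checks the exponents beat $2+2t$. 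You instead prove the tail bound from scratch: conditioning on the block $(X^j,X^v)$ makes $({}^tX^iX^j,{}^tX^uX^v)$ a centred bivariate Gaussian whose conditional correlation $r_{iu}\hat\rho_{jv}$ is \emph{deterministically} at most $1-\delta$ in modulus, and the elementary sum-of-coordinates tail bound plus $\chi^2$ concentration for $\|X^j\|^2,\|X^v\|^2$ yields $\fP_{01}=O\bigl(p_n^{-4/((1+\eps)(2-\delta))}\bigr)$, whose exponent exceeds $2$ for small $\eps$. Your exponent $4/(2-\delta)$ differs numerically from the paper's $\min(2b^2,\,2+2c^2)$, but both strictly exceed $2$, which is all that is needed against $p_n^2\tau^2$ with $\tau=o(p_n^t)$ for every $t>0$; note only that $t$ is not something one ``chooses in Hypothesis 2'' --- the hypothesis holds for all $t>0$ and you simply apply it with $t<\delta/(2-\delta)$. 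What your approach buys is independence from the external supplement of \cite{CaiJiang} and an explicit treatment of the shared-index degeneracies ($i=u$ or $j=v$), which the paper's $\Sigma_4$ silently ignores; be careful there that you must condition on the block containing the \emph{shared} column (so that the conditional correlation is the population coefficient $r_{jv}$ or $r_{iu}$, bounded by $1-\delta$, rather than the empirical $\hat\rho_{jv}$, which is not deterministically bounded away from $1$). What the paper's route buys is uniformity with the subsequent lemmas (\ref{cas12}--\ref{cas18}), which recycle the same Cai--Jiang estimates with modified $\Sigma_4$'s.
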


\begin{proof}
	We have :
	\begin{eqnarray*}
		\left|\Lpo\right|.\left|\Omega_1\right|\fP_{01} & \leqslant &\left|\Lpo\right| \tau^2\fP_{01}\sim \frac{1}{2}p_n^2\tau^2\fP_{01}=o\left(p_n^{2+2t}\fP_{01}\right) \text{  for any  } t>0\\
	\end{eqnarray*} 
	where we use \cref{card} for the equivalent. We can write : 
	\begin{eqnarray}
	\fP_{01} & = & \fP\left(\left|\sum\limits_{k=1}^{n}u_k^1u_k^2\right|>a_n,\left|\sum\limits_{k=1}^{n}u_k^3u_k^4\right|>a_n\right)\\
	\end{eqnarray}
	where $\left(u_k^1,u_k^2,u_k^3,u_k^4\right)_{1\leqslant k \leqslant n} \overset{i.i.d}{\sim} \mathcal{N}_{4}\left(0,\Sigma_4\right)$ and 
	\[\Sigma_4=\left(\begin{array}{cccc}
	1 & 0 & r_1 & 0 \\ 
	0 & 1 & 0 & r_2 \\ 
	r_1 & 0 & 1 & 0 \\ 
	0 & r_2 & 0 & 1
	\end{array}\right)\,,\]
	where coefficients $r_1,r_2$ are from the correlation matrix $(r_{kj})$.
	From Lemma $6.11$ of \cite{CaiJiang}, focusing on equation $(131)$, we know that
	\begin{eqnarray}
	\fP_{01} & \leqslant & O\left( p_n^{-2b^2+\varepsilon_1} \right) + O\left( p_n^{-2-2c^2+\varepsilon_2} \right) \text{ as } n \rightarrow +\infty
	\end{eqnarray}
	for any $\varepsilon_1,\varepsilon_2 >0$ and where $a = \frac{1 + (1-\delta)^2}{2}$, $b=\frac{a}{(1-\delta)^2}$ and $c=\frac{1-a}{3}$ for $\delta \in ]0,1[$.
	By construction $b^2-1>0$, hence for a well-chosen $t$ such that $t < b^2-1$, there exists $\varepsilon_1(\delta)>0$ such that we have :
	\begin{eqnarray}\label{int1}
	\varepsilon_1 < 2b^2-2-2t\,.
	\end{eqnarray} 
	Analogously, we can find $\varepsilon_2(\delta)$ such that:
	\begin{eqnarray}
	\varepsilon_2 < 2\left(c^2-t\right)\,.
	\end{eqnarray}
	Since $\tau =o(p^t)$ for any $t>0$, we have	\begin{eqnarray}
	\left|\Lpo\right|.\left|\Omega_1\right|\fP_{01} \rightarrow 0 \text{ as } n \rightarrow +\infty\,.
	\end{eqnarray}
\end{proof}

\begin{lem}\label{cas12}
	Using previous notations, we have :
	\begin{eqnarray}
	\left|\Lpo\right|.\left|\Omega_2\right|\fP_{02}\rightarrow 0
	\end{eqnarray} 
\end{lem}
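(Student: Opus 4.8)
The plan is to mirror the proof of Lemma~\ref{cas11}, the only genuinely new feature being the extra factor $K$ produced by $|\Omega_2|\le\tau K$ (against $|\Omega_1|\le\tau^2$). First I would reduce the quantity through the cardinality estimates of Lemma~\ref{card}: since $|\Lpo|\sim p_n^2/2$, one has
\[ \left|\Lpo\right|\left|\Omega_2\right|\fP_{02}\ \le\ \tfrac12 p_n^2\,\tau K\,\fP_{02}\,(1+o(1)). \]
Because $\tau=o(p_n^t)$ for every $t>0$ (Hyp~2) and $K=O(p_n^{\nu})$ (Hyp~5), the right-hand side is $o\!\left(p_n^{2+\nu+t}\,\fP_{02}\right)$ for every $t>0$, so it is enough to prove that $\fP_{02}$ decays faster than $p_n^{-2-\nu}$.

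Next I would pin down the Gaussian structure behind $\fP_{02}$. Writing $\alpha=(i,j)\in\Lpo$ and $\beta=(u,v)\in\Omega_2$, the columns split into two clusters $\{i,u\}$ and $\{j,v\}$ separated by more than $\tau+K$, so among $X^i,X^j,X^u,X^v$ the only nonzero off-diagonal correlations are $r_1:=r_{iu}=\gamma_{iu}$, which obeys $|r_1|\le 1-\delta$ by definition of $\Lpo$, and $r_2:=r_{jv}=\epsilon_n$, coming from the transitional band. Thus $\fP_{02}=\fP\!\left(\left|{}^tX^iX^j\right|>a_n,\ \left|{}^tX^uX^v\right|>a_n\right)$ with covariance
\[ \Sigma_4=\left(\begin{array}{cccc}1&0&r_1&0\\0&1&0&\epsilon_n\\ r_1&0&1&0\\0&\epsilon_n&0&1\end{array}\right), \]
i.e.\ exactly the matrix of Lemma~\ref{cas11} with the second correlation replaced by $\epsilon_n\to 0$. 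Since the bound of Lemma~$6.11$ of \cite{CaiJiang} (equation~$(131)$) depends only on the uniform bound $1-\delta$ and $|\epsilon_n|\le 1-\delta$ for $n$ large, the same estimate applies,
\[ \fP_{02}\ \le\ O\!\left(p_n^{-2b^2+\varepsilon_1}\right)+O\!\left(p_n^{-2-2c^2+\varepsilon_2}\right) \]
for any $\varepsilon_1,\varepsilon_2>0$, with $a=\tfrac{1+(1-\delta)^2}{2}$, $b=\tfrac{a}{(1-\delta)^2}$, $c=\tfrac{1-a}{3}$; the vanishing of $\epsilon_n$ can only sharpen this, so invoking the generic bound is safe.

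It remains to combine the two displays. The first term contributes $p_n^{2+\nu+t-2b^2+\varepsilon_1}$, whose exponent is negative for small $t,\varepsilon_1$ because $b^2-1>0$ (as in Lemma~\ref{cas11}) and $\nu<c^2<2(b^2-1)$. The main point—and the only place where Hyp~5 is really needed here—is the second term, which contributes $p_n^{\nu+t-2c^2+\varepsilon_2}$; since $c^2=\tfrac{\delta^2(2-\delta)^2}{36}$ is precisely the $\delta$-component of $c(\gamma,\delta)$, Hyp~5 gives $\nu<c^2<2c^2$, and choosing $t,\varepsilon_2$ small makes this exponent negative. Both terms then vanish, giving $\left|\Lpo\right|\left|\Omega_2\right|\fP_{02}\ninf 0$. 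I expect the main obstacle to be exactly this absorption of the new factor $K=O(p_n^{\nu})$: whereas in Lemma~\ref{cas11} the factor $|\Omega_1|\le\tau^2=o(p_n^t)$ is free, here the decay of $\fP_{02}$ must beat $p_n^{2+\nu}$, which is guaranteed solely by the bound $\nu<\delta^2(2-\delta)^2/36$ carried by $c(\gamma,\delta)$ in Hyp~5.
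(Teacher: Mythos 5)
Your proposal is correct and follows essentially the same route as the paper: bound the sum by $|\Lpo|\,|\Omega_2|\,\fP_{02}=O(\tau p_n^{2+\nu}\fP_{02})$ via Lemma \ref{card}, identify the covariance $\Sigma_4$ as the Lemma \ref{cas11} matrix with the second correlation replaced by $\eps_n$ (still bounded by $1-\delta$ for large $n$), invoke Lemma 6.11 of \cite{CaiJiang}, and absorb the extra factor $K=O(p_n^{\nu})$ using $\nu<c^2<2c^2$ and $\nu<2(b^2-1)$. Your exponent bookkeeping matches the paper's conditions $\varepsilon_1<2b^2-2-\nu$ and $\varepsilon_2<2c^2-\nu$, so there is nothing to add.
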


\begin{proof}
	We have :
	\begin{eqnarray*}
		\left|\Lpo\right|.\left|\Omega_2\right|\fP_{02} & \leqslant & \tau K\left|\Lpo\right|\fP_{02}\sim \frac{1}{2}p_n^2\tau K\fP_{02}=O\left(\tau p_n^{2+\nu}\fP_{02}\right)\\
	\end{eqnarray*} 
	where we use the lemma \ref{card} for the equivalence above. In this proof, we almost have the same case than in the proof of lemma \ref{cas11}. In fact, the only difference is the matrix $\Sigma_4$ which is now \[\Sigma_4=\left(\begin{array}{cccc}
	1 & 0 & r & 0 \\ 
	0 & 1 & 0 & \eps_n \\ 
	r & 0 & 1 & 0 \\ 
	0 & \eps_n & 0 & 1
	\end{array}\right)\,,\]  
	where $r$ is a coefficient from the matrix $(r_{kj})$.
	So, by the same method we have 
	\begin{eqnarray}
	p_n^{2+\nu}\fP_{02} \rightarrow 0 
	\end{eqnarray}
	iff $\varepsilon_1 < 2b^2-2-\nu$ and $\varepsilon_2 < 2c^2-\nu$ where we still have $b=\frac{1+\left(1-\delta\right)^2}{2\left(1-\delta\right)^2}$ and $c=\frac{1-\left(1-\delta\right)^2}{6}$. Moreover we can show that $b^2-1>c^2$. Then, if $\nu < 2c^2$ (fullfilled by assumptions in theorem \ref{main_result}), and from $\tau=o\left(p_n^t\right)$ for any $t>0$, we have :
	\begin{eqnarray}
	\left|\Lpo\right|.\left|\Omega_2\right|\fP_{02}\rightarrow 0\,.
	\end{eqnarray} 
\end{proof}

\begin{lem}\label{cas13}
	Using notations previously introduced, we have :
	\begin{eqnarray}
	\left|\Lpo\right|.\left|\Omega_3\right|\fP_{03}\rightarrow 0
	\end{eqnarray} 
\end{lem}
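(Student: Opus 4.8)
The plan is to mimic the proof of \cref{cas12} exactly, since $\Omega_3$ is symmetric to $\Omega_2$ under exchange of the two index-differences $i-u$ and $j-v$. First I would bound the cardinalities using \cref{card}: we have $|\Omega_3|\leqslant \tau K$ and $|\Lpo|\sim \frac12 p_n^2$, so that
\begin{equation}
\left|\Lpo\right|.\left|\Omega_3\right|\fP_{03}\leqslant \tau K\left|\Lpo\right|\fP_{03}\sim \frac{1}{2}p_n^2\tau K\fP_{03}=O\left(\tau p_n^{2+\nu}\fP_{03}\right),
\end{equation}
where I have used $K=O(p_n^{\nu})$ from \cref{hyp5}. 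The only remaining task is to control $\fP_{03}$.

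Next I would identify the covariance structure of the four Gaussian columns involved. For $\alpha\in\Lpo$ and $\beta\in\Omega_3$, the constraint $\tau<i-u<\tau+K$ means the pair $(i,u)$ sits in the transition band (covariance $\eps_n$), while $j-v<\tau$ means the pair $(j,v)$ sits in the central band (covariance $r$). This is precisely the same configuration as in $\Omega_2$ but with the roles of the two coordinate pairs swapped, so the relevant matrix is
\[\Sigma_4=\left(\begin{array}{cccc}
1 & 0 & \eps_n & 0 \\
0 & 1 & 0 & r \\
\eps_n & 0 & 1 & 0 \\
0 & r & 0 & 1
\end{array}\right),\]
which is just a relabelling of the matrix in \cref{cas12}. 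Writing $\fP_{03}=\fP\bigl(|\sum_k u_k^1u_k^2|>a_n,\,|\sum_k u_k^3u_k^4|>a_n\bigr)$ with the $(u_k^1,u_k^2,u_k^3,u_k^4)$ i.i.d.\ $\mathcal{N}_4(0,\Sigma_4)$, I would then invoke Lemma $6.11$ of \cite{CaiJiang} (equation $(131)$) verbatim to obtain
\begin{equation}
\fP_{03}\leqslant O\left(p_n^{-2b^2+\varepsilon_1}\right)+O\left(p_n^{-2-2c^2+\varepsilon_2}\right)
\end{equation}
for any $\varepsilon_1,\varepsilon_2>0$, with the same constants $b=\frac{1+(1-\delta)^2}{2(1-\delta)^2}$ and $c=\frac{1-(1-\delta)^2}{6}$.

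Finally I would check that the exponents close. Combining the cardinality bound with the estimate on $\fP_{03}$, the product $p_n^{2+\nu}\fP_{03}$ tends to $0$ provided $\varepsilon_1<2b^2-2-\nu$ and $\varepsilon_2<2c^2-\nu$. As in \cref{cas12}, one uses $b^2-1>c^2$ and the assumption $\nu<2c^2$ (guaranteed by the $\frac{\delta^2(2-\delta)^2}{36}$ term in $c(\gamma,\delta)$ of \cref{hyp5}) to choose $\varepsilon_1,\varepsilon_2>0$ small enough; the extra factor $\tau=o(p_n^t)$ for every $t>0$ (\cref{hyp2}) is absorbed harmlessly. Hence
\begin{equation}
\left|\Lpo\right|.\left|\Omega_3\right|\fP_{03}\rightarrow 0\,.
\end{equation}
I expect no genuine obstacle here: the entire content is a symmetry argument reducing $\Omega_3$ to the already-treated $\Omega_2$ case, so the only thing to verify carefully is that swapping the two coordinate pairs leaves the joint-tail bound of \cite{CaiJiang} applicable (it does, since that lemma is symmetric in the two blocks), and that the same inequality $b^2-1>c^2$ still delivers admissible $\varepsilon_1,\varepsilon_2$.
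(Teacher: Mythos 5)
Your proposal is correct and follows exactly the paper's own route: the paper likewise reduces Lemma \ref{cas13} to Lemma \ref{cas12} by noting that the only change is the covariance matrix
\(\left(\begin{smallmatrix}1 & 0 & \eps_n & 0\\ 0 & 1 & 0 & r\\ \eps_n & 0 & 1 & 0\\ 0 & r & 0 & 1\end{smallmatrix}\right)\),
which yields the same condition on $\nu$. Your write-up simply makes explicit the cardinality bound, the appeal to Lemma 6.11 of \cite{CaiJiang}, and the exponent check that the paper leaves implicit.
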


\begin{proof}
	This proof is exactly the same than for lemma \ref{cas12} except that the matrix becomes 
	\[\Sigma_4=\left(\begin{array}{cccc}
	1 & 0 & \eps_n & 0 \\ 
	0 & 1 & 0 & r \\ 
	\eps_n & 0 & 1 & 0 \\ 
	0 & r & 0 & 1
	\end{array}\right)\,.\] In particular, we obtain the same condition on $\nu$.
\end{proof}

\begin{lem}\label{cas14}
	Using notations previously introduced, we have :
	\begin{eqnarray}
	\left|\Lpo\right|.\left|\Omega_4\right|\fP_{04}\rightarrow 0
	\end{eqnarray} 
\end{lem}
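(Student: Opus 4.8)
The plan is to follow the very same scheme as in the proofs of Lemmas \ref{cas11}--\ref{cas13}: bound the cardinality factor $|\Lpo|\,|\Omega_4|$ crudely, identify the Gaussian covariance structure governing $\fP_{04}$, and then show the joint tail probability $\fP_{04}$ is small enough to absorb that factor. First I would record the cardinalities. By Lemma \ref{card}, $|\Lpo|\sim p_n^2/2$, and by construction $|\Omega_4|\leqslant \tau(p_n-\tau-K)\leqslant \tau p_n$, so that
$$|\Lpo|\,|\Omega_4|\,\fP_{04}\;\leqslant\; \tfrac12\,p_n^2\,\tau\,p_n\,\fP_{04}\;=\;O\!\left(\tau\,p_n^{3}\,\fP_{04}\right)\;=\;o\!\left(p_n^{3+t}\,\fP_{04}\right)\quad\text{for any }t>0,$$
where the last step uses Hyp 2. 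The crucial difference with $\Omega_1,\Omega_2,\Omega_3$ is that the prefactor is now of order $p_n^{3}$ (the index $v$ runs freely over $\{\,\tau+K\leqslant j-v\,\}$), rather than $p_n^{2}$; hence a bound of order $p_n^{-2}$ as obtained in Lemma \ref{cas11} would \emph{not} be enough, and I will need $\fP_{04}=o(p_n^{-3-t})$.

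Next I would write down the covariance structure. With $\alpha=(i,j)\in\Lpo$ and $\beta=(u,v)\in\Omega_4$, the constraints $|i-u|<\tau$ and $j-v\geqslant\tau+K$, together with $|i-j|>\tau+K$ and $|u-v|>\tau+K$, force all correlations among $\left(X^i,X^j,X^u,X^v\right)$ to vanish except the central–band coefficient $r$ between $X^i$ and $X^u$, the coefficient between $X^j$ and $X^v$ being now \emph{null}. Writing $\fP_{04}=\fP\left(|{}^tX^iX^j|>a_n,\,|{}^tX^uX^v|>a_n\right)$ with $\left(u_k^1,u_k^2,u_k^3,u_k^4\right)_{k}\overset{i.i.d}{\sim}\mathcal N_4(0,\Sigma_4)$ and
$$\Sigma_4=\left(\begin{array}{cccc} 1 & 0 & r & 0 \\ 0 & 1 & 0 & 0 \\ r & 0 & 1 & 0 \\ 0 & 0 & 0 & 1\end{array}\right),\qquad |r|\leqslant 1-\delta,$$
the decisive feature, and the reason the estimate improves over Lemma \ref{cas11}, is that $X^j$ and $X^v$ are each independent of the three other variables. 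Conditionally on $(X^i,X^u)$ the inner products $\langle X^i,X^j\rangle$ and $\langle X^u,X^v\rangle$ are therefore \emph{independent} centred Gaussians with variances $\|X^i\|^2$ and $\|X^u\|^2$, whence
$$\fP_{04}=\E\!\left[\,2\bar\Phi\!\left(\frac{a_n}{\|X^i\|}\right)\,2\bar\Phi\!\left(\frac{a_n}{\|X^u\|}\right)\right],$$
$\bar\Phi$ denoting the standard Gaussian tail. The same conclusion can be obtained by invoking Lemma $6.11$ of \cite{CaiJiang} with this $\Sigma_4$: setting the entry in position $(2,4)$ to $0$ decouples the two tail events and replaces the $p_n^{-2}$ bound of Lemma \ref{cas11} by one of order $p_n^{-4}$.

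The hard part is then the clean quantitative control of the last expectation. Here I would use that $a_n^2/n=4\log p_n-\log\log p_n+y$ and that $\|X^i\|^2,\|X^u\|^2$ are $\chi^2_n$ variables concentrating around $n$: on the event where both norms lie in $[(1-\eta)n,(1+\eta)n]$ each factor is of order $p_n^{-2}$, giving a product of order $p_n^{-4}$ up to a factor $p_n^{\varepsilon}$, while the complementary event has exponentially small probability by a standard chi–square deviation inequality. The (possibly large) correlation $r$ between $X^i$ and $X^u$ affects only the \emph{joint} law of the two norms, not the conditional independence, so it is harmless. This yields $\fP_{04}=O\!\left(p_n^{-4+\varepsilon}\right)$ for every $\varepsilon>0$, and consequently
$$|\Lpo|\,|\Omega_4|\,\fP_{04}=O\!\left(\tau\,p_n^{-1+\varepsilon}\right)=o\!\left(p_n^{-1+\varepsilon+t}\right)\ninf 0,$$
upon choosing $\varepsilon$ and $t$ with $\varepsilon+t<1$. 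I expect the only genuine subtlety to be making the concentration step uniform enough that the $\varepsilon$-loss stays strictly below $1$; the index bookkeeping and the reduction to Gaussian tails are routine and entirely parallel to Lemmas \ref{cas11}--\ref{cas13}.
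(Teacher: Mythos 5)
Your proposal is correct and follows the paper's proof in all essentials: the same cardinality bound $\left|\Lpo\right|.\left|\Omega_4\right|\fP_{04}=O\left(\tau p_n^{3}\fP_{04}\right)$, the same covariance matrix $\Sigma_4$ with the single off-diagonal entry $r$ in position $(1,3)$, the same target estimate $\fP_{04}=O\left(p_n^{-4+\eps}\right)$ for any $\eps>0$, and the same conclusion via $\tau=o\left(p_n^{t}\right)$. The only difference is that where the paper simply cites Lemma 6.9 of \cite{CaiJiang} for this estimate, you re-derive it directly by conditioning on $\left(X^i,X^u\right)$ and using $\chi^2_n$ concentration of the norms, which is a valid, self-contained route to the same bound.
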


\begin{proof}
	We have :
	\begin{eqnarray}
	\left|\Lpo\right|.\left|\Omega_4\right|\fP_{04} \leqslant\tau p_n \left|\Lpo\right|\fP_{04}\sim \tau p_n^3\fP_{04}
	\end{eqnarray}
	Now, the correlation matrix is $\Sigma_4=\left(\begin{array}{cccc}
	1 & 0 & r & 0 \\ 
	0 & 1 & 0 & 0 \\ 
	r & 0 & 1 & 0 \\ 
	0 & 0 & 0 & 1
	\end{array}\right)$.\\
	Thanks to the lemma 6.9 in \cite{CaiJiang}, proved in the supplementary paper, we obtain $\fP_{04}=O\left(p_n^{-4+\eps}\right)$ for any $\eps >0$. Then, we have $p_n^3 \tau \fP_{04}=O\left(\frac{\tau}{p_n^{1-\eps}}\right)$ which tends to $0$ as $n\to\infty$ since $\tau = o\left(p_n^t\right)$ for any $t>0$.
\end{proof}

\begin{lem}\label{cas15}
	Using notations previously introduced, we have :
	\begin{eqnarray}
	\left|\Lpo\right|.\left|\Omega_5\right|\fP_{05}\rightarrow 0\,.
	\end{eqnarray} 
\end{lem}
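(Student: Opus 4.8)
The plan is to exploit the evident symmetry between $\Omega_5$ and $\Omega_4$: the set $\Omega_5$ is obtained from $\Omega_4$ by interchanging the roles of the first and second coordinates of each index pair, so I expect the argument of Lemma \ref{cas14} to transcribe almost verbatim. First I would reduce to a bound on $\fP_{05}$ through the cardinalities. Since $|\Omega_5| \leqslant \tau(p_n - \tau - K) \leqslant \tau p_n$ and, by Lemma \ref{card}, $|\Lpo| \sim p_n^2/2$, this gives
\[
\left|\Lpo\right|.\left|\Omega_5\right|\fP_{05} \leqslant \tau p_n \left|\Lpo\right|\fP_{05} \sim \tau p_n^3 \fP_{05}\,,
\]
exactly as in the opening line of the proof of Lemma \ref{cas14}.

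Next I would identify the covariance matrix $\Sigma_4$ of the four Gaussian columns $\left(X^i, X^j, X^u, X^v\right)$ attached to $\alpha = (i,j) \in \Lpo$ and $\beta = (u,v) \in \Omega_5$. The memberships $\alpha, \beta \in \Lpo$ force $X^i \perp X^j$ and $X^u \perp X^v$; the constraint $\tau + K \leqslant i - u$ makes $X^i \perp X^u$; and the constraint $j - v < \tau$ couples $X^j$ with $X^v$ through a single coefficient $r \in [-1,1]$ taken from $(r_{kj})$. Hence $\Sigma_4$ carries exactly one nonzero off-diagonal pair, now in the second-coordinate slot rather than the first as in Lemma \ref{cas14}:
\[
\Sigma_4=\left(\begin{array}{cccc}
1 & 0 & 0 & 0 \\
0 & 1 & 0 & r \\
0 & 0 & 1 & 0 \\
0 & r & 0 & 1
\end{array}\right)\,,
\]
which is precisely the image of the $\Omega_4$ matrix under the coordinate swap and so has the same shape.

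Because Lemma $6.9$ of \cite{CaiJiang} depends only on this shape --- a lone $r$-coupling with all remaining cross-correlations vanishing --- and not on which pair of columns carries the coupling, it yields $\fP_{05} = O\left(p_n^{-4+\eps}\right)$ for every $\eps > 0$, just as it gave $\fP_{04} = O\left(p_n^{-4+\eps}\right)$. Substituting into the display above, $\tau p_n^3 \fP_{05} = O\left(\frac{\tau}{p_n^{1-\eps}}\right) \to 0$ since $\tau = o\left(p_n^t\right)$ for any $t > 0$, which is the claim.

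The step I expect to be the main obstacle is the covariance verification, namely confirming that the two cross-correlations of $X^i$ with $X^v$ and of $X^j$ with $X^u$ really are $0$ (or at worst equal to $\eps_n \to 0$, and so do not introduce a genuine second coupling). This requires translating the inequalities defining $\Omega_5$ together with the $\Lpo$-memberships of $\alpha$ and $\beta$ into the index distances $|i - v|$ and $|j - u|$, and checking they exceed $\tau + K$; should one of them only exceed $\tau$, the entry would be $\eps_n$ and I would absorb it as a vanishing perturbation of $\Sigma_4$. Once the single-coupling shape is secured, the remainder is an immediate copy of the proof of Lemma \ref{cas14}.
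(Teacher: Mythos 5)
Your proposal is correct and follows exactly the route of the paper, which simply declares the proof to be that of Lemma \ref{cas14} with the single coupling $r$ moved to the $(2,4)$ entry of $\Sigma_4$ --- precisely the matrix you write down --- and then invokes Lemma $6.9$ of \cite{CaiJiang} and $\tau = o(p_n^t)$ in the same way. Your extra caution about whether the remaining cross-correlations might be $\eps_n$ rather than $0$ is a reasonable check the paper leaves implicit, but it does not change the argument.
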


\begin{proof}
	This proof is exactly the same than for lemma \ref{cas14} considering the correlation matrix 
	\[\Sigma_4=\left(\begin{array}{cccc}
	1 & 0 & 0 & 0 \\ 
	0 & 1 & 0 & r \\ 
	0 & 0 & 1 & 0 \\ 
	0 & r & 0 & 1
	\end{array}\right)\]
\end{proof}

\begin{lem}\label{cas16}
	Using notations previously introduced, we have :
	\begin{eqnarray}
	\left|\Lpo\right|.\left|\Omega_6\right|\fP_{06}\rightarrow 0
	\end{eqnarray} 
\end{lem}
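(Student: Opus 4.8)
The plan is to follow the same scheme as in the proofs of Lemmas \ref{cas11}--\ref{cas13}, reducing everything to a tail bound on $\fP_{06}$ via the cardinality estimates of Lemma \ref{card}. Since $|\Omega_6|\leqslant K^2$, $|\Lpo|\sim p_n^2/2$ and $K=O(p_n^{\nu})$, I would first write
\begin{equation*}
\left|\Lpo\right|.\left|\Omega_6\right|\fP_{06}\leqslant \frac{1}{2}p_n^2K^2\fP_{06}=O\left(p_n^{2+2\nu}\fP_{06}\right).
\end{equation*}

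Next I would identify the Gaussian structure governing $\fP_{06}$. The defining feature of $\Omega_6$ is that \emph{both} pairs $(i,u)$ and $(j,v)$ fall in the transition bandwidth, so $\fP_{06}$ is the joint tail probability for a vector $(u_k^1,u_k^2,u_k^3,u_k^4)_{1\leqslant k\leqslant n}\overset{i.i.d}{\sim}\mathcal{N}_4(0,\Sigma_4)$ whose two off-diagonal correlations are \emph{both} equal to $\eps_n$:
\begin{equation*}
\Sigma_4=\left(\begin{array}{cccc}
1 & 0 & \eps_n & 0 \\
0 & 1 & 0 & \eps_n \\
\eps_n & 0 & 1 & 0 \\
0 & \eps_n & 0 & 1
\end{array}\right).
\end{equation*}
This is the natural analogue of the matrices of Lemmas \ref{cas12} and \ref{cas13}, now carrying two transition-band entries rather than one.

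I would then invoke Lemma $6.11$ of \cite{CaiJiang} exactly as before. Because each of the four coordinates still obeys the $\delta$-separation built into $\Lpo$, the same constants $b=\frac{1+(1-\delta)^2}{2(1-\delta)^2}$ and $c=\frac{1-(1-\delta)^2}{6}$ arise, giving, for any $\varepsilon_1,\varepsilon_2>0$,
\begin{equation*}
\fP_{06}\leqslant O\left(p_n^{-2b^2+\varepsilon_1}\right)+O\left(p_n^{-2-2c^2+\varepsilon_2}\right)\text{ as }n\rightarrow+\infty.
\end{equation*}
Multiplying by the prefactor $p_n^{2+2\nu}$, the desired convergence holds provided $\varepsilon_1$ and $\varepsilon_2$ can be chosen with $\varepsilon_1<2b^2-2-2\nu$ and $\varepsilon_2<2c^2-2\nu$.

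The hard part is the second condition. Since the factor $|\Omega_6|\leqslant K^2$ costs $p_n^{2\nu}$ rather than the single $p_n^{\nu}$ appearing in Lemmas \ref{cas12}--\ref{cas13}, the requirement tightens from $\nu<2c^2$ to $\nu<c^2$, so I expect this case to be the binding one for the $\delta$-dependent part of the constraint. The key point is then the observation that $c^2=\frac{\delta^2(2-\delta)^2}{36}$, which is precisely the second term in the definition of $c(\gamma,\delta)$; hence $\nu<c^2$ is guaranteed by hypothesis $5$ in Theorem \ref{main_result}, while the first condition is automatic because $b^2-1>c^2>\nu$. This yields $\left|\Lpo\right|.\left|\Omega_6\right|\fP_{06}\rightarrow 0$ as $n\to\infty$.
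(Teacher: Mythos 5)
Your proposal is correct and follows essentially the same route as the paper: the same bound $\left|\Lpo\right|.\left|\Omega_6\right|\fP_{06}=O\bigl(p_n^{2+2\nu}\fP_{06}\bigr)$, the same covariance matrix $\Sigma_4$ with two $\eps_n$ entries, the same appeal to Lemma $6.11$ of \cite{CaiJiang}, and the same resulting constraint $\nu<c^2$ guaranteed by hypothesis $5$. Your explicit identification of $c^2$ with $\frac{\delta^2(2-\delta)^2}{36}$ in $c(\gamma,\delta)$ is a useful clarification the paper leaves implicit, but the argument is the same.
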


\begin{proof} 
	This proof is exactly the same than for lemma \ref{cas12} except that the matrix become 
	\[\Sigma_4=\left(\begin{array}{cccc}
	1 & 0 & \eps_n & 0 \\ 
	0 & 1 & 0 & \eps_n \\ 
	\eps_n & 0 & 1 & 0 \\ 
	0 & \eps_n & 0 & 1
	\end{array}\right)\] In particular, we have : 
	\begin{eqnarray}
	\left|\Lpo\right|.\left|\Omega_6\right|\fP_{06}\leqslant K^2\left|\Lpo\right|\fP_{06} \sim O\left(p_n^{2+2\nu}\fP_{06}\right)
	\end{eqnarray}
	with $\Sigma_4$ as correlation matrix for the $4$-uplet in $\fP_{06}$. As for lemma \ref{cas12}, we have the following conditions
	\begin{eqnarray}
	\eps_1 < 2b^2-2-2\nu \text{    and    } \eps_2< 2\left(c^2 - \nu\right)
	\end{eqnarray}
	which is summarized in $\nu < c^2$, and which is true considering theorem \ref{main_result}. Then, we obtain the desired result :
	\begin{eqnarray}
	\left|\Lpo\right|.\left|\Omega_6\right|\fP_{06}\rightarrow 0
	\end{eqnarray} 
\end{proof}

\begin{lem}\label{cas17}
	Using notations previously introduced, we have :
	\begin{eqnarray}
	\left|\Lpo\right|.\left|\Omega_7\right|\fP_{07}\rightarrow 0
	\end{eqnarray} 
\end{lem}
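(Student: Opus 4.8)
The plan is to transcribe the argument of Lemma \ref{cas14}, because the set $\Omega_7$ encodes exactly one correlated pair — here carrying the vanishing coefficient $\eps_n$ rather than a generic $r$ — together with one essentially independent pair. So the strategy is: identify the governing $4\times4$ correlation matrix, bound $\fP_{07}$ through the corresponding lemma of \cite{CaiJiang}, and finally compare cardinalities and exponents.

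First I would read off the relevant correlation matrix of $(X^i,X^j,X^u,X^v)$ for $\alpha=(i,j)\in\Lpo$ and $\beta=(u,v)\in\Omega_7$. The constraint $\tau<i-u<\tau+K$ puts $X^i$ and $X^u$ in the transition band, hence their correlation is $\eps_n$; the constraint $\tau+K\leqslant j-v$ makes $X^j$ and $X^v$ independent; and since both $\alpha$ and $\beta$ lie in $\Lpo$, together with the index gaps imposed by $\Omega_7$, the remaining pairs stay at distance larger than $\tau+K$ and are uncorrelated. Thus
\[\Sigma_4=\left(\begin{array}{cccc}
1 & 0 & \eps_n & 0 \\
0 & 1 & 0 & 0 \\
\eps_n & 0 & 1 & 0 \\
0 & 0 & 0 & 1
\end{array}\right)\,,\]
which is precisely the matrix of Lemma \ref{cas14} with $r$ replaced by $\eps_n$.

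Next, since $\eps_n\in\,]-1,1[$, I would invoke Lemma 6.9 of \cite{CaiJiang} verbatim to obtain $\fP_{07}=O\left(p_n^{-4+\eps}\right)$ for every $\eps>0$. Feeding this together with $|\Omega_7|\leqslant Kp_n$ and $|\Lpo|\nequi p_n^2/2$ (Lemma \ref{card}) gives
\[\left|\Lpo\right|.\left|\Omega_7\right|\fP_{07}\leqslant Kp_n\left|\Lpo\right|\fP_{07}\nequi\frac{1}{2}Kp_n^3\fP_{07}=O\left(Kp_n^{-1+\eps}\right)=O\left(p_n^{\nu-1+\eps}\right)\,,\]
where the last step uses $K=O(p_n^{\nu})$ from Hyp 5. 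As $\nu<c(\gamma,\delta)<1$, choosing $\eps<1-\nu$ forces the exponent below $0$, so the product tends to $0$.

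The main obstacle is the probability estimate $\fP_{07}=O(p_n^{-4+\eps})$; everything else is bookkeeping. Concretely, I must make sure the dependence configuration defining $\Omega_7$ genuinely reduces to (a sub-case of) the hypotheses of Lemma 6.9 of \cite{CaiJiang} — that is, that the off-diagonal blocks linking $\{i,u\}$ to $\{j,v\}$ are indeed negligible — so that the lemma can be applied despite the coefficient being $\eps_n$ instead of $r$. Once this reduction is secured, the smallness of $\eps_n$ only helps, and the remaining cardinality/exponent comparison is immediate.
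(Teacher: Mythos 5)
Your proposal matches the paper's proof: the paper likewise reduces Lemma \ref{cas17} to the argument of Lemma \ref{cas14} with the same correlation matrix $\Sigma_4$ (the coefficient $r$ replaced by $\eps_n$), invokes Lemma 6.9 of \cite{CaiJiang} to get $\fP_{07}=O(p_n^{-4+\eps})$, and concludes by the cardinality count $|\Omega_7|\leqslant Kp_n=O(p_n^{1+\nu})$ with $\nu<1$. Your explicit check that the off-diagonal blocks vanish and your bookkeeping of $K$ versus $\tau$ are just the details the paper leaves implicit in the phrase ``exactly the same as Lemma \ref{cas14}''.
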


\begin{proof}
	This proof is exactly the same than for lemma \ref{cas14} considering the correlation matrix 
	\[\Sigma_4=\left(\begin{array}{cccc}
	1 & 0 & \eps_n & 0 \\ 
	0 & 1 & 0 & 0 \\ 
	\eps_n & 0 & 1 & 0 \\ 
	0 & 0 & 0 & 1
	\end{array}\right)\,.\]
\end{proof}

\begin{lem}\label{cas18}
	Using notations previously introduced, we have :
	\begin{eqnarray}
	\left|\Lpo\right|.\left|\Omega_8\right|\fP_{08}\rightarrow 0
	\end{eqnarray} 
\end{lem}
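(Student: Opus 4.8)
The plan is to treat $\Omega_8$ as the mirror image of the region $\Omega_7$ handled in Lemma \ref{cas17}, since the two play perfectly symmetric roles. Here the first coordinates are far apart ($\tau+K\leqslant i-u$, so $X^i$ and $X^u$ are uncorrelated) while the second coordinates lie in the transition band ($\tau<j-v<\tau+K$, so $\mathrm{corr}(X^j,X^v)=\eps_n$). (Note that the definition printed for $\Omega_8$ literally coincides with that of $\Omega_5$ through a typo; the stated cardinality bound $|\Omega_8|\leqslant K(p_n-\tau-K)\leqslant Kp_n$ makes clear that the intended region is the one just described.) Concretely, I would first record the correlation matrix of the $4$-uplet $(X^i,X^j,X^u,X^v)$, namely
\[
\Sigma_4=\left(\begin{array}{cccc}
1 & 0 & 0 & 0 \\
0 & 1 & 0 & \eps_n \\
0 & 0 & 1 & 0 \\
0 & \eps_n & 0 & 1
\end{array}\right),
\]
the only surviving off-diagonal entry being $\mathrm{corr}(X^j,X^v)=\eps_n$ (the remaining cross terms vanish by the band structure, exactly as in the earlier cases $\Omega_4$ and $\Omega_7$).

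Next I would import the cardinality estimates. By Lemma \ref{card} we have $|\Lpo|\sim p_n^2/2$, and from the assumption $K=O(p_n^{\nu})$ we get $|\Omega_8|\leqslant Kp_n=O(p_n^{1+\nu})$, so that (absorbing the harmless constant factor $4$ coming from the decomposition of $Q_1$)
\[
|\Lpo|\,|\Omega_8|\,\fP_{08}=O\left(p_n^{3+\nu}\,\fP_{08}\right).
\]
The structure of $\Sigma_4$ is precisely the one-shared-correlation configuration covered by Lemma 6.9 of \cite{CaiJiang}, the same lemma invoked in Lemmas \ref{cas14} and \ref{cas17}; applying it yields $\fP_{08}=O(p_n^{-4+\eps})$ for every $\eps>0$. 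Substituting gives $|\Lpo|\,|\Omega_8|\,\fP_{08}=O(p_n^{-1+\nu+\eps})$, and since $\nu<c(\gamma,\delta)<1$ we may fix $\eps$ small enough that $-1+\nu+\eps<0$, whence $|\Lpo|\,|\Omega_8|\,\fP_{08}\rightarrow 0$.

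The only genuinely delicate point is justifying that Lemma 6.9 of \cite{CaiJiang} still applies when the single nonzero correlation is the vanishing quantity $\eps_n$ rather than a fixed $r\in[-1,1]$; I expect this to cause no trouble, and in fact to improve the bound, because as $\eps_n\rightarrow 0$ the events $\{Z_\alpha>a_n\}$ and $\{Z_\beta>a_n\}$ asymptotically decouple and $\fP_{08}$ approaches $\fPo^2=O(p_n^{-4})$. The symmetry with Lemma \ref{cas17} then makes every remaining estimate identical up to relabelling the coordinates, so no new computation is required beyond transcribing the $\Omega_7$ argument with the two coordinate blocks interchanged.
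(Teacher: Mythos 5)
Your proposal is correct and follows essentially the same route as the paper: the paper's proof of this lemma simply says it is identical to that of Lemma \ref{cas14} with the correlation matrix $\Sigma_4$ having the single off-diagonal entry $\eps_n$ in positions $(2,4)$ and $(4,2)$ --- exactly the matrix you wrote down --- and then invokes Lemma 6.9 of \cite{CaiJiang} to get $\fP_{08}=O(p_n^{-4+\eps})$, which combined with $|\Lpo|\,|\Omega_8|=O(p_n^{3+\nu})$ and $\nu<1$ gives the claim. Your observation that the printed definition of $\Omega_8$ duplicates that of $\Omega_5$ by a typo is accurate, and your reading of the intended region is confirmed by the paper's choice of $\Sigma_4$.
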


\begin{proof}
	This proof is exactly the same than for lemma \ref{cas14} considering the correlation matrix 
	\[\Sigma_4=\left(\begin{array}{cccc}
	1 & 0 & 0 & 0 \\ 
	0 & 1 & 0 & \eps_n \\ 
	0 & 0 & 1 & 0 \\ 
	0 & \eps_n & 0 & 1
	\end{array}\right)\,.\]
\end{proof}

\begin{rem}\label{rem3}
	The main constraint here for $Q_1$ is $\nu<c^2$.
\end{rem}

\vspace{0.5cm}
\underline{\textbf{Computation of $Q_2$ :}}
\vspace{0.5cm}
\\
For this case, we will divide the computation into two parts. Indeed, we will consider two cases : when $\alpha$ is close to the set $\Lpe$ and when it is not. For that purpose, we introduce the following sets : 

$$I_{0,I} = \{(i,j)\in[\![1,p]\!], i < j \text{ and } \tau + K < j - i < \tau + 4K \} \text{   and   } \LpoI = \Lpo \cap I_{0,I}$$ 

and 

$$I_{0,II}=I_0\backslash I_{0,I} \text{   and   } \LpoII = \Lpo \cap I_{0,II}\,.$$ 

We can write :

$$ Q_2 :=\sum\limits_{\alpha\in \Lpo}\sum\limits_{\beta\in \BalK}\fP_{\alpha\beta}=\sum\limits_{\alpha \in \LpoI}\sum\limits_{\beta\in \BalK}\fP_{\alpha\beta}+\sum\limits_{\alpha \in \LpoII}\sum\limits_{\beta\in \BalK}\fP_{\alpha\beta}\,.$$

Now, we look at the sum on $\LpoI$. We notice that on this set, the probability $\fP_{\alpha\beta}$ is issued from a Gaussian vector with correlation matrix 
\[\Sigma_4 =\left(\begin{array}{cccc}
1 & 0 & r_1 & r_2 \\ 
0 & 1 & r_3 & r_4 \\ 
r_1 & r_3 & 1 & \eps_n \\ 
r_2 & r_4 & \eps_n & 1
\end{array}\right)\] where $|r_i| \leqslant 1-\delta $ for all $i\in \{1,2,3,4\}$. Moreover, coefficients $(r_i)_{i}$ may be replaced here by $\eps_n$ according to the position of the indice in both sets $\LpoI$ and $\BalK$. But we know that $\liminfini(\eps_n)=0$ then, for $n$ large enough, we still have $|r_i| \leqslant 1- \delta$. Using Cauchy-Schwarz inequality, we can bound : 

$$\fP_{\alpha\beta}=\E\left[\mathbf{1}_{Z_{\alpha}>a_n}\mathbf{1}_{Z_{\beta}>a_n}\right] \leqslant \sqrt{\E\left[\mathbf{1}^2_{Z_{\alpha}>a_n}\right]\E\left[\mathbf{1}^2_{Z_{\beta}>a_n}\right]} \leqslant \sqrt{\E\left[\mathbf{1}_{Z_{\alpha}>a_n}\right]\E\left[\mathbf{1}_{Z_{\beta}>a_n}\right]}=\sqrt{\fP_{\alpha}\fP_{\beta}}$$

Now, we use the fact that $\alpha\in\LpoI \subset \Lpo$ and $\beta\in \BalK \subset I_K$, then : 

$$\fP_{\alpha\beta} \leqslant \sqrt{\fPo\fPk}$$

which is true for any $|r_i| \leqslant 1$ then, $\underset{|r_i|\leqslant 1, i=1,\dots, 4}{\sup}\fP_{\alpha\beta} \leqslant \sqrt{\fPo\fPk}$. At this point, using $|\LpoI| \leqslant 3Kp$, $|\BalK| \leqslant K^2$, and $\fPo = O\left(p^{-2}\right)$, we get:

$$\sum\limits_{\alpha \in \LpoI}\sum\limits_{\beta\in \BalK}\fP_{\alpha\beta} \leqslant 3K^3p\sqrt{\fPo\fPk} \leqslant 3K^3\fPk^{1/2}O(1)=O\left(p^{3\nu}\fP^{1/2}\right) \text{ as } n \rightarrow +\infty.$$

Now, using lemma \ref{probIk}:

\begin{equation}\label{eq62}
p^{3\nu}\fP^{1/2} \underset{n\rightarrow +\infty}{\longrightarrow}0 \Leftrightarrow 3\nu < \frac{1}{2}\left(\frac{1}{2}\gamma^2-2|\gamma|+2\right) \Leftrightarrow \nu < \frac{1}{6}c_{\gamma}\end{equation}

which is true according to assumptions of theorem \ref{main_result}.\\

Now, let us focuse on the computation of $\LpoII$. For that purpose, we introduce four subsets: 

\begin{itemize}
	\item $\Omega_1^2 := \{(u,v) \in \BalK : u - i < \tau \text{ and } j - v > \tau + K  \}$ and $|\Omega_1^2| \leqslant K\tau$
	\item $\Omega_2^2 := \{(u,v) \in \BalK : \tau + K < u - i \text{ and } j - v < \tau   \}$ and $|\Omega_2^2| \leqslant K\tau$
	\item $\Omega_3^2 := \{(u,v) \in \BalK : \tau \leqslant u - i \leqslant \tau + K \text{ and } j - v > \tau + K  \}$ and $|\Omega_2^2| \leqslant K^2$
	\item $\Omega_4^2 := \{(u,v) \in \BalK : \tau + K <  u - i  \text{ and } \tau \leqslant j - v \leqslant \tau + K  \}$ and $|\Omega_4^2| \leqslant K^2$
\end{itemize}

We have : 

$$\sum\limits_{\alpha \in \LpoII}\sum\limits_{\beta\in \BalK}\fP_{\alpha\beta} \leqslant 4\sum\limits_{i=1}^{4}\sum\limits_{\alpha \in \LpoII}\sum\limits_{\beta\in \Omega_i^2}\fP_{\alpha\beta}$$.

In order to consider all these subset, we have the four next lemmas : 

\begin{lem}\label{lemQ2}
	Considering the same notations as previously : $\left(u_k^1,u_k^2,u_k^3,u_k^4\right)_{1\leqslant k \leqslant n} \overset{i.i.d}{\sim} \mathcal{N}_{4}\left(0,\Sigma_4\right)$. If the probability $\fP_{\alpha\beta}$ is issued from a Gaussian vector with covariance matrix $\Sigma_4 =\left(\begin{array}{cccc}
	1 & 0 & r_1 & x \\ 
	0 & 1 & 0 & 0 \\ 
	r_1 & 0 & 1 & \eps_n \\ 
	x & 0 & \eps_n & 1
	\end{array}\right)$ where $x \in \{\eps_n,0\}$, then : 
	\begin{equation}
	\sum\limits_{\alpha\in\LpoII}\sum\limits_{\beta\in\Omega_{1}^2}\fP_{\alpha\beta} \leqslant O\left(p^{t+\nu+\epsilon}\fPk^{1/2}\right)
	\end{equation}
	for any $t>0$ and any $\epsilon >0$.
\end{lem}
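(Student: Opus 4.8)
The plan is to bound the generic term $\fP_{\alpha\beta}$ as sharply as possible and then sum using the cardinality estimates $|\LpoII|\leqslant|\Lpo|\sim\frac12 p_n^2$ (\cref{card}) and $|\Omega_1^2|\leqslant K\tau$. A naive attempt via Cauchy--Schwarz, exactly as for $\LpoI$ above, only yields $\fP_{\alpha\beta}\leqslant\sqrt{\fPo\fPk}=O(p_n^{-1})\fPk^{1/2}$, hence a total of order $p_n^2\cdot K\tau\cdot p_n^{-1}\fPk^{1/2}=O(p_n^{1+\nu+t}\fPk^{1/2})$, which carries one spurious factor $p_n$ and is therefore too weak. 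The whole point is to recover this extra $p_n^{-1}$ from the special shape of $\Sigma_4$: its second row and column vanish off the diagonal, so the column $X^j$ attached to the second coordinate is independent of the three remaining columns $X^i,X^u,X^v$.

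I would exploit this independence by conditioning. Writing $Z_\alpha=|{}^tX^iX^j|$ and $Z_\beta=|{}^tX^uX^v|$ and conditioning on $(X^i,X^u,X^v)$, the event $\{Z_\beta>a_n\}$ is determined while ${}^tX^iX^j$ is, given $X^i$, a centered Gaussian with variance $\|X^i\|^2$; hence
\begin{equation*}
\fP_{\alpha\beta}=\E\left[\mathbf{1}_{\{Z_\beta>a_n\}}\,2\overline{\Phi}\!\left(\frac{a_n}{\|X^i\|}\right)\right],
\end{equation*}
where $\overline{\Phi}$ denotes the standard Gaussian tail. I then split according to the concentration event $A=\{\|X^i\|^2\leqslant(1+\eta)n\}$ for a small $\eta=\eta(\epsilon)>0$. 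On $A$, since $a_n\sim\sqrt{4n\log p_n}$ one has $a_n/\|X^i\|\geqslant 2\sqrt{\log p_n/(1+\eta)}\,(1-o(1))$, so the Gaussian tail is at most $O(p_n^{-2+\epsilon})$ uniformly; bounding $\mathbf{1}_A\leqslant1$ and using $\fP(Z_\beta>a_n)=\fPk$ (as $\beta\in\BalK\subset\Lpe$) gives a contribution $O(p_n^{-2+\epsilon})\fPk$. On $A^c$ a standard $\chi^2_n$ large-deviation estimate gives $\fP(A^c)\leqslant e^{-cn}$ for some $c>0$, and since $2\overline{\Phi}\leqslant1$ this contributes at most $e^{-cn}$. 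Altogether $\fP_{\alpha\beta}\leqslant O(p_n^{-2+\epsilon})\fPk+e^{-cn}$.

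Multiplying by $|\LpoII||\Omega_1^2|\leqslant\frac12 p_n^2\cdot K\tau$ and invoking Hyp~2 and Hyp~5 ($K=O(p_n^{\nu})$, $\tau=o(p_n^t)$ for every $t>0$) yields
\begin{equation*}
\sum_{\alpha\in\LpoII}\sum_{\beta\in\Omega_1^2}\fP_{\alpha\beta}=O\!\left(K\tau\,p_n^{\epsilon}\right)\fPk+O\!\left(p_n^{2+\nu+t}\right)e^{-cn}=O\!\left(p_n^{t+\nu+\epsilon}\right)\fPk,
\end{equation*}
the exponential remainder being super-polynomially small by Hyp~1 ($\log p_n=o(n^{1/3})$) and hence negligible; since $\fPk\leqslant\fPk^{1/2}$ this is $O(p_n^{t+\nu+\epsilon}\fPk^{1/2})$, as claimed. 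The delicate point is the second step: Cauchy--Schwarz is provably insufficient, and the extra $p_n^{-1}$ can only come from genuinely using that $X^j$ is independent of the three other columns, i.e. from the fact that after conditioning the tail of $Z_\alpha$ is an almost deterministic quantity of order $\fPo=O(p_n^{-2})$ (\cref{probI0}) rather than being coupled to $Z_\beta$. Controlling the rare event $\{\|X^i\|^2\gg n\}$ so that it does not spoil this bound is the only technical care required, and it is handled by the exponential $\chi^2$ tail together with Hyp~1.
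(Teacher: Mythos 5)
Your proof is correct, and it rests on the same structural observation as the paper's --- that the second row and column of $\Sigma_4$ vanish off the diagonal, so the column attached to the second coordinate is independent of the other three, and the extra factor $p_n^{-1}$ missed by naive Cauchy--Schwarz must come from this independence --- but the execution is genuinely different. The paper conditions on $u^1$, notes that $Z_{12}$ and $Z_{34}$ are then conditionally independent, applies Cauchy--Schwarz to the product of the two conditional expectations, and invokes Lemma~6.7 of \cite{CaiJiang} as a black box to get $\E\left[\E\left[\mathbf{1}_{Z_{12}>a_n}\mid u^1\right]^2\right]=O\left(p^{-4+\epsilon}\right)$, whose square root supplies the needed $O\left(p^{-2+\epsilon}\right)$; the other factor is bounded by $\fPk^{1/2}$. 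You instead condition on all three correlated columns, identify the conditional law of ${}^tX^iX^j$ exactly as $\mathcal{N}\left(0,\|X^i\|^2\right)$, bound the resulting Gaussian tail uniformly on the $\chi^2_n$ concentration event $\{\|X^i\|^2\leqslant(1+\eta)n\}$, and dispose of the complement with an exponential tail bound. This is self-contained (it essentially re-proves the content of the external lemma), avoids Cauchy--Schwarz at the $Z_{\alpha}$--$Z_{\beta}$ level entirely, and yields the slightly stronger bound $O\left(p^{t+\nu+\epsilon}\right)\fPk$ rather than $O\left(p^{t+\nu+\epsilon}\right)\fPk^{1/2}$. The only point worth tightening is the absorption of the remainder $O\left(p^{2+\nu+t}e^{-cn}\right)$ into $O\left(p^{t+\nu+\epsilon}\fPk^{1/2}\right)$: this implicitly uses that $\fPk^{1/2}$ is at worst polynomially small in $p_n$, which does hold by the asymptotic equivalent established in the proof of Lemma \ref{probIk} (since $z_n\to 2-\gamma$), but should be said explicitly; under Hyp~1 the exponential term is super-polynomially small in $p_n$, so the conclusion stands.
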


\begin{proof}
	In order to prove this result, we observe that in this case, for all $k \geqslant 1$, $u_k^2$ is independent of $\{u_k^1,u_k^3,u_k^4\}$. It means that conditionally on $u_k^1$, we have independence between $Z_{12}$ and $Z_{34}$. By consequence, using Cauchy-Schwarz, we obtain : 
	\begin{eqnarray}
	\fP_{\alpha\beta} & = & \E\left[\E\left[\mathbf{1}_{Z_{12}>a_n}\mathbf{1}_{Z_{34}>a_n}|u_k^1, k=1, \dots, n\right]\right]\\
	& = & \E\left[\E\left[\mathbf{1}_{Z_{12}>a_n}|u_k^1, k=1, \dots, n\right]\E\left[\mathbf{1}_{Z_{34}>a_n}|u_k^1, k=1, \dots, n\right]\right]\\
	& \leqslant & \sqrt{\E\left[\E\left[\mathbf{1}_{Z_{12}>a_n}|u_k^1, k=1, \dots, n\right]^2\right]\E\left[\E\left[\mathbf{1}_{Z_{34}>a_n}|u_k^1, k=1, \dots, n\right]^2\right]}
	\end{eqnarray}
	Now, because $u_k^1$ is independent of $u_k^2$, we can use lemma 6.7 from \cite{CaiJiang} and we have :
	
	$$\E\left[\E\left[\mathbf{1}_{Z_{12}>a_n}|u_k^1, k=1, \dots, n\right]^2\right]=O\left(p^{-4+\epsilon}\right)$$ 
	
	for any $\epsilon>0$. And on the other side, we have :
	
	$$\E\left[\E\left[\mathbf{1}_{Z_{34}>a_n}|u_k^1, k=1, \dots, n\right]^2\right]\leqslant \fPk$$
	
	Finally, using $|\LpoII|\leqslant p^2$ and $|\Omega_1^2| \leqslant K\tau$, and writing $K=O(p^{\nu})$, we have the desired result :
	\begin{equation}
	\sum\limits_{\alpha\in\LpoII}\sum\limits_{\beta\in\Omega_{1}^2}\fP_{\alpha\beta} \leqslant O\left(p^{t+\nu+\epsilon}\fPk^{1/2}\right)
	\end{equation}
\end{proof}

Now, from lemma \ref{lemQ2} we have the condition : 
$$t + \nu + \epsilon < \frac{1}{2}\left(\frac{1}{2}\gamma^2-2|\gamma|+2\right)\,,$$
which can be fullfilled from condition  in \cref{eq62} and for well-chosen $t>0$ and $\epsilon >0$. Finally we obtain, with our condition on $\nu$ that :

$$\liminfini\left[\sum\limits_{\alpha\in\LpoII}\sum\limits_{\beta\in\Omega_{1}^2}\fP_{\alpha\beta}\right]=0\,.$$

For the other subsets $\Omega_i^2$, $i=2,3,4$, we will use the same method. Indeed we notice that respectively for $\Omega_2^2$, $\Omega_3^2$ and $\Omega_4^2$, the covariance matrices involved are respectively : 

\begin{equation}
\Sigma_4^2 =\left(\begin{array}{cccc}
1 & 0 & 0 & 0 \\ 
0 & 1 & x & r \\ 
0 & x & 1 & \eps_n \\ 
0 & r & \eps_n & 1
\end{array}\right)
\text{   ,  } 
\Sigma_4^3 =\left(\begin{array}{cccc}
1 & 0 & \eps_n & x \\ 
0 & 1 & 0 & 0 \\ 
\eps_n & 0 & 1 & \eps_n \\ 
x & 0 & \eps_n & 1
\end{array}\right)
\text{   ,  }
\Sigma_4^4 =\left(\begin{array}{cccc}
1 & 0 & 0 & 0 \\ 
0 & 1 & x & \eps_n \\ 
0 & x & 1 & \eps_n \\ 
0 & \eps_n & \eps_n & 1
\end{array}\right)
\end{equation}

For each case, we use the fact that we always have $u_k^2$ (or $u_k^1$) independent of the other three random variables. Also, in order to use the \cref{lemQ2}, we notice that by construction $Z_{12}=Z_{21}$. Then, for cases $\Omega_3^2$ and $\Omega_4^2$, we just have to consider the Gaussian vector $(u_k^2,u_k^1,u_k^3,u_k^4)$ instead of $(u_k^1,u_k^2,u_k^3,u_k^4)$. In that way, all matrices have the same form than in lemma \ref{lemQ2} and similar upper-bound for the probability. Now, we use the upper-bound of subsets $\Omega_i^2$. More precisely, for $\Omega_3^2$ and $\Omega_4^2$:
$$\sum\limits_{\alpha\in\LpoII}\sum\limits_{\beta\in\Omega_{3}^2}\fP_{\alpha\beta} \leqslant |\LpoII|.|\Omega_3^2|\fPk^{1/2}O\left(p^{-2+\epsilon}\right)= O\left(p^{2\nu +\epsilon}\fPk^{1/2}\right) \text{ for any } \epsilon >0$$
It means that we need to have, according to lemma \ref{probIk} : 
\begin{equation}\label{eq70}
2\nu  < \frac{1}{2}\left(\frac{1}{2}\gamma^2-2|\gamma|+2\right) \Leftrightarrow \nu  < \frac{1}{4}\left(\frac{1}{2}\gamma^2-2|\gamma|+2\right)\,.
\end{equation}
With this condition
$$\liminfini\left[\sum\limits_{\alpha\in\LpoII}\sum\limits_{\beta\in\Omega_{3}^2}\fP_{\alpha\beta}\right]=0$$
and 
$$\liminfini\left[\sum\limits_{\alpha\in\LpoII}\sum\limits_{\beta\in\Omega_{4}^2}\fP_{\alpha\beta}\right]=0$$
Finally, the case $\Omega_2^2$ leads to the exactly same result than for $\Omega_1^2$ because of the upper-bound on $|\Omega_2^2|$ which is the same than for $|\Omega_1^2|$. Then, we have : 
$$\liminfini\left[\sum\limits_{\alpha \in \LpoII}\sum\limits_{\beta\in \BalK}\fP_{\alpha\beta}\right]=0$$ 
and then 
$$\liminfini\left[Q_2\right]=0$$
\begin{rem}\label{rem4}
	The main constaint here is $\nu<\frac{1}{6}c_{\gamma}$.
\end{rem}
\vspace{0.5cm}
\underline{ \textbf{Computation of $Q_3$ :}}
\vspace{0.5cm}

We focuse here on $Q_3 = \sum\limits_{\alpha\in\Lpe}\sum\limits_{\beta\in\Balo}\fP_{\alpha\beta}$. Once more, we will consider different subsets for $\beta$ according to its place into $\Balo$. More precisely, let us define : 

\begin{itemize}
	\item $\Omega_1^3=\{(u,v)\in\Lpo : i-u < \tau \text{ and } v-j < \tau \}$ and $|\Omega_1^3|\leqslant \tau^2$.
	\item $\Omega_2^3=\{(u,v)\in\Lpo : i-u < \tau \text{ and } \tau \leqslant v-j \leqslant  \tau+K \}$ and $|\Omega_2^3|\leqslant \tau K$.	
	\item $\Omega_3^3=\{(u,v)\in\Lpo : i-u < \tau \text{ and } \tau + K < v - j \}$ and $|\Omega_3^3|\leqslant \tau p$.
	\item $\Omega_4^3=\{(u,v)\in\Lpo : \tau \leqslant i-u \leqslant \tau+K \text{ and } v-j < \tau \}$ and $|\Omega_4^3|\leqslant \tau K$.
	\item $\Omega_5^3=\{(u,v)\in\Lpo : \tau + K < i-u \text{ and } v-j < \tau \}$ and $|\Omega_5^3|\leqslant \tau p$.
	\item $\Omega_6^3=\{(u,v)\in\Lpo : \tau \leqslant i-u \leqslant \tau+K  \text{ and } \tau \leqslant v-j \leqslant  \tau+K \}$ and $|\Omega_6^3|\leqslant K^2$.
	\item $\Omega_7^3=\{(u,v)\in\Lpo : \tau \leqslant i-u \leqslant \tau+K  \text{ and } \tau + K < v - j \}$ and $|\Omega_7^3|\leqslant Kp$.
	\item $\Omega_8^3=\{(u,v)\in\Lpo : \tau + K < i-u  \text{ and } \tau \leqslant v-j \leqslant  \tau+K \}$ and $|\Omega_8^3|\leqslant Kp$.
\end{itemize}

Then, we have : 
$$Q_3 \leqslant \sum\limits_{i=1}^{8}\sum\limits_{\alpha\in\Lpe}\sum\limits_{\beta\in\Omega_i^3}\fP_{\alpha\beta}$$

For $Q_3$, we  use the computation of $Q_2$. Indeed, covariance matrices which are involved in the computation fo $Q_3$ are similar than for $Q_2$. The similarity comes from the fact that we exchange the role between $\alpha$ and $\beta$. More precisely, for $Q_2$ we had $\alpha\in\Lpo$ and $\beta\in\BalK$ and now we have $\alpha\in\Lpe$ and $\beta\in\Balo$. So, covariance matrices here will have the same structure than in $Q_2$ exchanging columns $\{1,2\}$ and columns $\{3,4\}$.\\

We notice that : 
\begin{equation}\label{majQ3}
|\Omega_1^3|,|\Omega_2^3|,|\Omega_4^3|\leqslant |\Omega_6^3|\leqslant K^2
\end{equation} Using the fact that $\Balo \subset \Lpo$, we have :
$$\sum\limits_{\alpha\in\Lpe}\sum\limits_{\beta\in\Omega_6^3}\fP_{\alpha\beta}\leqslant \sum\limits_{\alpha\in\Lpe}\sum\limits_{\beta\in\Omega_6^3}\fPo \leqslant pK.K^2\fPo =O\left(p^{3\nu-1}\right) \text{ as } n \rightarrow +\infty$$

However, from condition in \cref{eq62}, we have $\nu < \frac{1}{3}\left(\frac{1}{4}\gamma^2-|\gamma|+1\right)$. But, $\gamma\in ]-2+\sqrt{2},2-\sqrt{2}[$. Then, $\frac{1}{3}\left(\frac{1}{4}\gamma^2-|\gamma|+1\right) \in ]\frac{1}{6},\frac{1}{3}[$. It leads, in particular, to $\nu < \frac{1}{3}$ and then : 
$$\liminfini\left[\sum\limits_{\alpha\in\Lpe}\sum\limits_{\beta\in\Omega_6^3}\fP_{\alpha\beta}\right]=0$$

And so, from equation \cref{majQ3} : 
\begin{equation}
\liminfini\left[\sum\limits_{\alpha\in\Lpe}\sum\limits_{\beta\in\Omega_1^3}\fP_{\alpha\beta}\right]=\liminfini\left[\sum\limits_{\alpha\in\Lpe}\sum\limits_{\beta\in\Omega_2^3}\fP_{\alpha\beta}\right]=\liminfini\left[\sum\limits_{\alpha\in\Lpe}\sum\limits_{\beta\in\Omega_4^3}\fP_{\alpha\beta}\right]=0
\end{equation}

Now, we look at the case when $\beta$ belongs to $\Omega_3^3,\Omega_5^3,\Omega_7^3$ and $\Omega_8^3$. We start by describing each covariance matrix involved. We note $x \in \{\eps_n,0\}$ and $r$ a correlation coefficient such that $|r|\leqslant 1-\delta$. We have : 
\begin{itemize}
	\item[$\bullet$] for $\alpha\in\Lpe, \beta\in\Omega_3^3$, $\Sigma_4^3 =\left(\begin{array}{cccc}
	1 & \eps_n & r & 0 \\ 
	\eps_n & 1 & x & 0 \\ 
	r & x & 1 & 0 \\ 
	0 & 0 & 0 & 1
	\end{array}\right)$
	\item[$\bullet$] for $\alpha\in\Lpe, \beta\in\Omega_5^3$, $\Sigma_4^3 =\left(\begin{array}{cccc}
	1 & \eps_n & 0 & x \\ 
	\eps_n & 1 & 0 & r \\ 
	0 & 0 & 1 & 0 \\ 
	x & r & 0 & 1
	\end{array}\right)$
	\item[$\bullet$] for $\alpha\in\Lpe, \beta\in\Omega_7^3$, $\Sigma_4^3 =\left(\begin{array}{cccc}
	1 & \eps_n & \eps_n & 0 \\ 
	\eps_n & 1 & x & 0 \\ 
	\eps_n & x & 1 & 0 \\ 
	0 & 0 & 0 & 1
	\end{array}\right)$
	\item[$\bullet$] for $\alpha\in\Lpe, \beta\in\Omega_8^3$, $\Sigma_4^3 =\left(\begin{array}{cccc}
	1 & \eps_n & 0 & x \\ 
	\eps_n & 1 & 0 & \eps_n \\ 
	0 & 0 & 1 & 0 \\ 
	x & \eps_n & 0 & 1
	\end{array}\right)$.\\
\end{itemize}
We observe that for each case above one variable $u_k^3$ or $u_k^4$ is independent from the three other ones. Then we can use the same method than for $Q_2$ (conditioning on $u_k^3$ when $u_k^4$ is independent of the other ones or reversely on $u_k^3$).Then, by Cauch-Schwarz, we obtain the same upper-bound for $\fP_{\alpha\beta}$. To show that we obtain the desired convergence, we study here the worst case. That is to say, using the fact that $|\Omega_i^3| \leqslant Kp$ for $i=3,5,7,8$, we can write :

$$\sum\limits_{\alpha\in\Lpe}\sum\limits_{\beta\in\Omega_7^3}\fP_{\alpha\beta}\leqslant |\Lpe|.|\Omega_7^3|\fPk^{1/2}O\left(p^{-2+\epsilon}\right) \leqslant O\left(p^{2\nu+\epsilon}\fPk\right) \text{ as } n \rightarrow +\infty$$

Then, we have exactly the same condition on $\nu$ that for equation \cref{eq70}. It means that 
$$\liminfini\left[\sum\limits_{\alpha\in\Lpe}\sum\limits_{\beta\in\Omega_7^3}\fP_{\alpha\beta}\right]=0,$$ 
and it induces that 
$$\liminfini\left[Q_3\right]=0.$$

\pagebreak
\underline{ \textbf{Computation of $Q_4$ :}}
\medskip\par

This last quantity is simpler because we can write : 

$$Q_4 = \sum\limits_{\alpha\in\Lpe}\sum\limits_{\beta\in\BalK}\fP_{\alpha\beta}\leqslant \sum\limits_{\alpha\in\Lpe}\sum\limits_{\beta\in\BalK}\fPk = \left|\Lpe\right|.\left|\BalK\right|\fPk \leqslant pK^3\fP_k = O\left(p^{1+3\nu}\fPk\right) \text{ as } n \rightarrow +\infty$$

So, to have $\liminfini\left(Q_4\right)=0$ we must have, according to \cref{probIk}: 

\begin{eqnarray*}
	1+3\nu < \frac{1}{2}\gamma^2-2|\gamma|+2 & \Leftrightarrow & \nu <\frac{1}{3}( c_{\gamma}-1)\\
\end{eqnarray*}
Under this assumption on $\nu$ we have
$$\liminfini\left[Q_4\right]=0$$

Finally, gathering the results from $Q_1$ to $Q_4$ and under sufficient assumtions, we have : 

$$\liminfini\left[b_{2,n}\right]=0.$$

\begin{rem}\label{rem5}
	The constraint on $\nu$ is $\nu<\frac{1}{3}(c_{\gamma}-1)$.
\end{rem}

\subsubsection{Computation of $b_{3,n}$}

We have :
\begin{multline}
b_{3,n} =  \sum\limits_{\alpha \in \Lpo \cup \Lpe}\E\left[\left|\E\left[\mathbf{1}_{Z_{\alpha}>a_n}|\sigma\left(Z_{\beta}, \beta \in (\Lpo \cup \Lpe)\backslash B_{\alpha}\right)\right]-\E\left[\mathbf{1}_{Z_{\alpha}>a_n}\right]\right|\right]\\
=  \sum\limits_{\alpha \in \Lpo }\E\left[\left|\E\left[\mathbf{1}_{Z_{\alpha}>a_n}|\sigma\left(Z_{\beta}, \beta \in (\Lpo \cup \Lpe)\backslash B_{\alpha}\right)\right]-\E\left[\mathbf{1}_{Z_{\alpha}>a_n}\right]\right|\right]\\
+  \sum\limits_{\alpha \in  \Lpe}\E\left[\left|\E\left[\mathbf{1}_{Z_{\alpha}>a_n}|\sigma\left(Z_{\beta}, \beta \in (\Lpo \cup \Lpe)\backslash B_{\alpha}\right)\right]-\E\left[\mathbf{1}_{Z_{\alpha}>a_n}\right]\right|\right]\\
\end{multline}

The first term of the RHS above is $0$ from the choice of $B_{\alpha}$.
Hence

\begin{eqnarray}
b_{3,n}
& = & \sum\limits_{\alpha \in  \Lpe}\E\left[\left|\E\left[\mathbf{1}_{Z_{\alpha}>a_n}|\sigma\left(Z_{\beta}, \beta \in (\Lpo \cup \Lpe)\backslash B_{\alpha}\right)\right]-\E\left[\mathbf{1}_{Z_{\alpha}>a_n}\right]\right|\right]\\
& \leqslant & \sum\limits_{\alpha \in  \Lpe}\E\left[\E\left[\mathbf{1}_{Z_{\alpha}>a_n}|\sigma\left(Z_{\beta}, \beta \in (\Lpo \cup \Lpe)\backslash B_{\alpha}\right)\right]\right]+\E\left[\E\left[\mathbf{1}_{Z_{\alpha}>a_n}\right]\right]\\
& \leqslant & 2\left|\Lpe\right|\fPk\\
\end{eqnarray}
According to the hypotheses in theorem \ref{hyp5}, we have 
$$\liminfini\left[\left|\Lpe\right|\fPk\right]=0$$ 
Finally, we obtain :

$$\liminfini\left[b_{3,n}\right]=0$$

\begin{rem}Gathering remark \ref{rem1} to \ref{rem5} and noticing that on $\gamma\in ]-2+\sqrt 2,2-\sqrt 2[$ we have $\frac{1}{3}(c_{\gamma}-1)<\frac{1}{6}c_{\gamma}$ we get the final assumptions of theorem \ref{hyp5} for $\nu$.
\end{rem}

\hfill$\square$
{\subsection{Proof of Theorem  \ref{main_result}}
We showed in Section \ref{chenStenVntauprime} that

	\begin{eqnarray}
	\fP\left(V_{n,\tau}'\leqslant a_n\right)=\exp\left(-\lambda_n\right)+o\left(1\right) \text{  as  } n \rightarrow +\infty
	\end{eqnarray}
	Thanks to  lemma \ref{equiVtau}, we have, for $n$ large enough :
	\begin{eqnarray}
	\fP\left(V_{n,\tau}\leqslant a_n\right)=\exp\left(-\lambda_n\right)+o\left(1\right) \text{  as  } n \rightarrow +\infty
	\end{eqnarray}
From the expressions of $a_n$ and $\lambda_n$, this leads us to the asymptotic behaviour :
	\begin{eqnarray}
	\frac{1}{n}V_{n,\tau}^2-4\log\left(p_n\right)+\log\log\left(p_n\right) \underset{n\rightarrow +\infty}{\overset{\mathcal{L}}{\longrightarrow}} Z
	\end{eqnarray}
	where $Z$ has the Gumbel cdf defined in theorem \ref{main_result}. Then, we can write :
	\begin{eqnarray}
	\frac{1}{n\log\left(p_n\right)}V_{n,\tau}^2 -4 \underset{n\rightarrow +\infty}{\overset{\fP}{\longrightarrow}} 0
	\end{eqnarray}
	Then, from Proposition \ref{convLnvn} we have \eqref{mainlimit}.

\hfill$\square$
}
\section{Proofs of technical results}\label{tech}
\subsection{Proof of Proposition \ref{convLnvn}}
We first recall here the basic definition of tightness :
\begin{defn}\label{deftight}
	Let $(u_n)$ be a real sequence. We say that $(u_n)$ is a tight sequence if :
	\begin{equation}
	\forall \epsilon >0, \exists K >0 \text{,  } \underset{n\geq 1}{\sup} \left(\fP\left( |u_n| \geq K \right) \right)< \epsilon
	\end{equation}
\end{defn}
\par\noindent
For the proof of \cref{convLnvn} we use the following lemma which is proved in \cite{Jiang04} (see Lemma 2.2):

\begin{lem}\label{NormeJiang}
	Let $\tau$ be an integer. We define $|||A|||=\underset{1\leq i < j \leq p, |i-j| > \tau}{\max}|A_{ij}|$ for a\\ $(p\times p)$-matrix.  Let $\mathbf{X}$ be a random $(n,p)$-matrix where $(X^1, X^2, \dots, X^p)$ are the $p$ columns in $\R^n$ and $R_n$ the empirical correlation matrix of $\mathbf{X}$. Let's define, for any $k \in [\![1,p]\!]$ :
	\begin{itemize}
		\item $h_k=\frac{1}{\sqrt{n}}\parallel X^k-\overline{X^k}\mathbf{1}_n\parallel$ with $\overline{X^k}=\frac{1}{n}\sum\limits_{i=1}^{n}X^k_i$
		\item $c_{n,1}=\underset{1\leqslant k \leqslant p}{\max}|h_k-1|$
		\item $c_{n,2}  =\underset{1\leqslant k \leqslant p}{\min} h_k$
		\item $c_{n,3}  =\underset{1\leqslant k \leqslant p}{\max}|\overline{X^k}|$
		\item $V_{n,\tau}=\underset{1\leqslant k < j \leqslant p,|k-j|\geqslant \tau}{\max}|{}^tX^kX^j|=\underset{1\leqslant k < j \leqslant p,|k-j|\geqslant \tau}{\max}|\sum\limits_{i=1}^{n}X^k_iX^j_i|$
	\end{itemize}
	Then, 
	\begin{equation} |||nR_n -{}^t\mathbf{X}\mathbf{X}||| \leqslant \frac{c_{n,1}^2+2c_{n,1}}{c_{n,2}  ^2}V_{n,\tau}+n\left(\frac{c_{n,3}  }{c_{n,2}  }\right)^2
	\end{equation}
\end{lem}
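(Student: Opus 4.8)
The plan is to compute the entries of $nR_n - {}^t\mathbf{X}\mathbf{X}$ explicitly and then bound them one index pair at a time. First I would rewrite the $(k,j)$ entry of $nR_n$ starting from the definition of $\rho_{kj}$. Its denominator is $\|X^k-\overline{X^k}\mathbf{1}_n\|\,\|X^j-\overline{X^j}\mathbf{1}_n\| = n\,h_k h_j$, and expanding the centered inner product yields the identity
\[
\langle X^k-\overline{X^k}\mathbf{1}_n,\; X^j-\overline{X^j}\mathbf{1}_n\rangle = {}^tX^kX^j - n\,\overline{X^k}\,\overline{X^j},
\]
which follows from $\langle X^k,\mathbf{1}_n\rangle = n\overline{X^k}$ after cancelling the cross terms. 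Hence $(nR_n)_{kj} = \big({}^tX^kX^j - n\,\overline{X^k}\,\overline{X^j}\big)/(h_k h_j)$, whereas $({}^t\mathbf{X}\mathbf{X})_{kj} = {}^tX^kX^j$.

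Subtracting and splitting the resulting fraction, I would write the entry of the difference as
\[
(nR_n - {}^t\mathbf{X}\mathbf{X})_{kj} = {}^tX^kX^j\Big(\tfrac{1}{h_k h_j}-1\Big) - \tfrac{n\,\overline{X^k}\,\overline{X^j}}{h_k h_j},
\]
so that the two error sources are cleanly separated. The key estimate is on the factor $\big|\tfrac{1}{h_k h_j}-1\big| = \tfrac{|1-h_k h_j|}{h_k h_j}$. Writing $h_k = 1+(h_k-1)$ and $h_j = 1+(h_j-1)$ and expanding the product gives $|1-h_k h_j| \leq |h_k-1|+|h_j-1|+|h_k-1|\,|h_j-1| \leq 2c_{n,1}+c_{n,1}^2$, while $h_k h_j \geq c_{n,2}^2$ by definition of $c_{n,2}$. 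Combining, $\big|\tfrac{1}{h_k h_j}-1\big| \leq (c_{n,1}^2+2c_{n,1})/c_{n,2}^2$. For the second term, $|\overline{X^k}|,|\overline{X^j}| \leq c_{n,3}$ together with $h_k h_j \geq c_{n,2}^2$ gives $\big|\tfrac{n\,\overline{X^k}\,\overline{X^j}}{h_k h_j}\big| \leq n\,(c_{n,3}/c_{n,2})^2$, a bound uniform in $(k,j)$.

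Finally, by the triangle inequality every entry satisfies
\[
|(nR_n - {}^t\mathbf{X}\mathbf{X})_{kj}| \leq \frac{c_{n,1}^2+2c_{n,1}}{c_{n,2}^2}\,|{}^tX^kX^j| + n\Big(\frac{c_{n,3}}{c_{n,2}}\Big)^2.
\]
Taking the maximum over the off-band indices $|k-j|>\tau$, for which $|{}^tX^kX^j| \leq V_{n,\tau}$ by the definition of $V_{n,\tau}$ (the second, uniform term being unaffected by the restriction), produces exactly the claimed inequality. The only genuinely delicate points are the centering identity for the numerator and the careful expansion of $|1-h_k h_j|$; the remainder is triangle-inequality bookkeeping. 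I do not expect a serious obstacle here, since each bound is elementary once the difference is decomposed into the normalization error and the mean-correction error.
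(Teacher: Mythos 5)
Your proof is correct: the centering identity for the numerator, the bound $|1-h_kh_j|\leq 2c_{n,1}+c_{n,1}^2$ with $h_kh_j\geq c_{n,2}^2$, and the final triangle inequality all check out, and the restriction to $|k-j|>\tau$ indeed puts you inside the index set of $V_{n,\tau}$. The paper itself gives no proof of this lemma---it simply cites Lemma 2.2 of Jiang (2004)---and your argument is essentially that standard elementary decomposition into the normalization error and the mean-correction error, so there is nothing to flag.
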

\medskip\pn
We denote here by
	$\Delta_n := \left|nL_{n,\tau}-V_{n,\tau}\right|$, for any $n\geqslant 1$. We have :
	\begin{equation}
	\left|n^2L_{n,\tau}^2-V_{n,\tau}^2\right|=\left|nL_{n,\tau}-V_{n,\tau}\right|.\left|nL_{n,\tau}+V_{n,\tau}\right| \leqslant \Delta_n . \left(\Delta_n +2V_{n,\tau}\right)
	\end{equation}
	Now, we can notice :\\
	\begin{equation}
	\Delta_n \leqslant |||nR_n-{}^t\mathbf{X}\mathbf{X}||| {\leqslant} \frac{c_{n,1}^2+2c_{n,1}}{c_{n,2}  ^2}V_{n,\tau}+n\left(\frac{c_{n,3}  }{c_{n,2}  }\right)^2
	\end{equation}
	
	where the upper bound above refers to lemma \ref{NormeJiang}. Using definition \ref{deftight}, we see that \\ $\left(c_{n,1}\sqrt{\frac{n}{\log(p_n)}}\right)_{n\geqslant 1}:=\left(c_{n,1}'\right)_{n\geqslant 1}$ and $\left(c_{n,3}  \sqrt{\frac{n}{\log(p_n)}}\right)_{n\geqslant 1}$:=$\left(c_{n,3}  '\right)_{n\geqslant 1}$ are both tight sequences. In that way, both sequences $c_{1,n}=c_{n,1}'\sqrt{\frac{\log(p_n)}{n}}$ and $c_{4,n}=c_{4,n}'\sqrt{\frac{\log(p)}{n}}$ are tight too (from Hyp 1 in theorem \ref{hyp2}, $\frac{\log(p_n)}{n}\rightarrow 0$ when $n\rightarrow +\infty$).\\
	So, 
	\begin{eqnarray*}
		\Delta_n & \leqslant & \frac{\frac{\log(p_n)}{n}c^{'2}_{n,1}+2\sqrt{\frac{\log(p_n)}{n}}c_{n,1}'}{c_{n,2}  ^2}V_{n,\tau}+n\frac{\log(p_n)}{n}\left(\frac{c_{n,3}  '}{c_{n,2}  }\right)^2\\
		\frac{\Delta_n}{\log(p_n)} & \leqslant & \frac{\frac{1}{n}c^{'2}_{n,1}+2\sqrt{\frac{1}{n}}c_{n,1}'}{c_{n,2}  ^2}V_{n,\tau}+\left(\frac{c_{n,3}  '}{c_{n,2}  }\right)^2\\
		\frac{\Delta_n}{\log(p_n)} & \leqslant & \frac{V_{n,\tau}}{\sqrt{n\log(p_n)}}\frac{c_{n,1}^{'2}\sqrt{\frac{\log(p_n)}{n}}+2c_{n,1}'}{c_{n,2}  ^2}+\left(\frac{c_{n,3}  '}{c_{n,2}  }\right)^2\\
	\end{eqnarray*}
	With this inequality, we notice that the sequence $(\Delta_n')_{n\geqslant 1}:=\left(\frac{\Delta_n}{\log(p_n)}\right)_{n\geqslant 1}$ is tight. 
 Finally, with tightness of $\left(\Delta_n'\right)_{n\geqslant 1}$, we have from assumptions in theorem \ref{hyp1} :
	\begin{eqnarray*}
		\frac{\left|n^2L_{n,\tau}^2-V_{n,\tau}^2\right|}{n} & \leqslant & \frac{\Delta_n}{n}\left(\Delta_n+2V_{n,\tau}\right)=\frac{\Delta_n'\log(p_n)}{n}\left(\Delta_n'\log(p_n)+2V_{n,\tau}\right)\\
		& \leqslant & 2\sqrt{\frac{\left(\log(p_n)\right)^3}{n}}\left(\Delta_n'\sqrt{\frac{\log(p_n)}{n}}+V_{n,\tau}\right)\underset{n\rightarrow +\infty}{\overset{\fP}{\longrightarrow}}0.\\
	\end{eqnarray*}

\subsection{Proof of Lemma \ref{probI0}}
For the proof of this Lemma, we need the following technical result which is presented in \cite{CaiJiang} (see Lemma 6.8) and proved in the supplementary paper.
\begin{lem}\label{lemme68}
	We consider the following hypotheses :
	\begin{enumerate}
		\item $\xi_1, \dots, \xi_n$ i.i.d random variables such that $\E[\xi_1]=0$ and $\E[\xi_1^2]=1$.
		\item $\exists t_0>0$, $\exists\alpha \in ]0,1]$ such that $\E\left[e^{t_0|\xi_1|^{\alpha}}\right]<+\infty$.
		\item $(p_n)_{n\in \N^*}$ such that $p_n\underset{n\rightarrow +\infty}{\longrightarrow}+\infty$ and $\log(p_n)=o\left(n^{\frac{\alpha}{2+\alpha}}\right)$ as ${n\rightarrow +\infty}$
		\item $(y_n)_{n \geqslant 1}$ such that $y_n \underset{n\rightarrow +\infty}{\longrightarrow}y >0$
	\end{enumerate}
	Then, 
	\begin{equation} \fP\left(\frac{1}{\sqrt{n\log(p_n)}}\sum\limits_{k=1}^{n}\xi_k \geqslant y_n\right)\underset{n\rightarrow +\infty}{\sim}\frac{1}{y\sqrt{2\pi}}p_n^{-\frac{1}{2}y_n^2}\sqrt{\log(p_n)}^{-1}
	\end{equation}
\end{lem}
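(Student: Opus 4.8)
The plan is to recognize the stated asymptotic as a Cram\'er-type moderate deviation statement and to establish it by truncation followed by an exponential change of measure and a saddle-point expansion. First I would rewrite the target. Set $S_n=\sum_{k=1}^n\xi_k$, $b_n=\sqrt{\log p_n}$ and $x_n=y_nb_n$, so that $x_n\to+\infty$ and the event is $\{S_n\ge x_n\sqrt n\}$. Writing $\Phi$ for the standard normal cdf, the classical tail estimate $1-\Phi(z)\sim \frac{1}{z\sqrt{2\pi}}e^{-z^2/2}$ together with $x_n\sim y b_n$ shows that the right-hand side of the claim equals $\frac{1}{y\sqrt{2\pi}\,b_n}e^{-x_n^2/2}\sim 1-\Phi(x_n)$. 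Hence the whole assertion reduces to the ratio convergence
\[
\frac{\fP\!\left(S_n\ge x_n\sqrt n\right)}{1-\Phi(x_n)}\underset{n\to\infty}{\longrightarrow}1,
\qquad x_n=y_n\sqrt{\log p_n}.
\]
The difficulty specific to the hypothesis $\E e^{t_0|\xi_1|^\alpha}<\infty$ with $\alpha\in]0,1[$ is that $\E e^{h\xi_1}$ may be infinite for every $h>0$, so the usual tilting cannot be applied to the $\xi_k$ directly; this is precisely the role of the truncation. I note that the moment hypothesis does yield $\fP(|\xi_1|>t)\le Ce^{-t_0t^\alpha}$ and that all polynomial moments of $\xi_1$ are finite.

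Second I would truncate. Fix a level $T_n$ and set $\bar\xi_k=\xi_k\mathbf 1_{|\xi_k|\le T_n}$, $\tilde\xi_k=\bar\xi_k-\E\bar\xi_k$, with $\tilde S_n=\sum_k\tilde\xi_k$. Two constraints pin down $T_n$: (i) the exceedance $\fP(\max_k|\xi_k|>T_n)\le nCe^{-t_0T_n^\alpha}$ must be negligible compared with the target $e^{-x_n^2/2}$, which asks for $T_n^\alpha\gg \log p_n+\log n$; and (ii) the tilting parameter $h_n\asymp x_n/\sqrt n$ must satisfy $h_nT_n\to0$, which asks for $T_n=o(\sqrt{n/\log p_n})$. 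A level with $(\log p_n)^{1/\alpha}\ll T_n\ll \sqrt{n/\log p_n}$ exists exactly because of the assumption $\log p_n=o(n^{\alpha/(2+\alpha)})$, so I would pick such a $T_n$. A routine estimate then shows that $\E\bar\xi_k=-\E[\xi_k\mathbf 1_{|\xi_k|>T_n}]$ and $\sigma_n^2:=\mathrm{Var}(\tilde\xi_k)=1-O(e^{-cT_n^\alpha})$ are corrections that are $o(x_n^{-2})$, hence do not disturb the leading order, and that on the complement of the exceedance event $S_n=\tilde S_n+n\E\bar\xi_k$. This reduces the problem to the same ratio convergence for the bounded, centered i.i.d.\ sum $\tilde S_n$.

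Third, for the bounded variables I would carry out the saddle-point argument. Let $\psi_n(h)=\log\E e^{h\tilde\xi_1}$, finite for all $h$ since $|\tilde\xi_1|\le 2T_n$, and choose $h_n>0$ with $n\psi_n'(h_n)=x_n\sqrt n$; since $\psi_n'(0)=0$, $\psi_n''(0)=\sigma_n^2\to1$ and $h_n\to0$, one gets $h_n\sim x_n/\sqrt n$. The change of measure gives
\[
\fP\!\left(\tilde S_n\ge x_n\sqrt n\right)=e^{\,n\psi_n(h_n)-h_n x_n\sqrt n}\;
\E_{n}\!\big[e^{-h_n(\tilde S_n-x_n\sqrt n)}\mathbf 1_{\tilde S_n\ge x_n\sqrt n}\big],
\]
where $\E_n$ is the tilted expectation under which $\tilde S_n$ has mean $x_n\sqrt n$ and variance $n\psi_n''(h_n)\sim n$. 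A Taylor expansion of the cumulant gives $n\psi_n(h_n)-h_nx_n\sqrt n=-\tfrac12 x_n^2\big(1+o(1)\big)$, the cubic remainder being of order $n h_n^3\asymp x_n^3/\sqrt n\to0$ because the hypothesis forces $x_n=o(n^{1/6})$ (indeed $\alpha/(2(2+\alpha))\le 1/6$) while the third moment stays bounded. Finally a local central limit theorem (or a Berry--Esseen bound uniform in the tilting, for the bounded i.i.d.\ tilted summands) evaluates the residual expectation as $\frac{1}{h_n\sqrt{2\pi n}\,\sigma_{h_n}}(1+o(1))\sim\frac{1}{x_n\sqrt{2\pi}}$. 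Multiplying the two factors yields $\fP(\tilde S_n\ge x_n\sqrt n)\sim \frac{1}{x_n\sqrt{2\pi}}e^{-x_n^2/2}\sim1-\Phi(x_n)$, which is the desired ratio limit.

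The main obstacle is this last step: obtaining the exact prefactor $\frac{1}{x_n\sqrt{2\pi}}$ requires a local limit / uniform Berry--Esseen estimate for the tilted bounded summands valid as $h_n\to0$ while the support grows with $T_n$, and one must verify that all truncation-induced corrections ($\E\bar\xi_1$, $\sigma_n^2-1$, the cubic cumulant term) are $o(x_n^{-2})$ or $o(1)$ at the exponential precision $e^{-x_n^2/2}$, not merely $o(1)$ in probability. By contrast the earlier reductions (Gaussian tail asymptotics and negligibility of the exceedance) are comparatively routine, once the admissible window for $T_n$, guaranteed by $\log p_n=o(n^{\alpha/(2+\alpha)})$, has been fixed.
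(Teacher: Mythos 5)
The paper does not actually prove this lemma: it is imported verbatim from Cai and Jiang (their Lemma 6.8, proved in the supplementary material \cite{CaiJiangPlus}), so your argument can only be compared with the standard literature rather than with an in-paper proof. Your route is the classical Cram\'er--Linnik moderate-deviation argument, and it is the right one: setting $x_n=y_n\sqrt{\log p_n}$, the claim is exactly $\fP(S_n\geq x_n\sqrt n)\sim 1-\Phi(x_n)$ with $x_n=o\left(n^{\alpha/(2(2+\alpha))}\right)$, and your truncation window $(\log n+\log p_n)^{1/\alpha}\ll T_n\ll\sqrt{n/\log p_n}$ is nonempty precisely because $\log p_n=o\left(n^{\alpha/(2+\alpha)}\right)$; the exponent arithmetic ($h_nT_n\to 0$, $nh_n^3\asymp x_n^3/\sqrt n\to 0$ since $\alpha/(2(2+\alpha))\leq 1/6$, superpolynomially small truncation errors) checks out. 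What this buys over the paper's bare citation is a self-contained proof that exhibits where each hypothesis enters, in particular why $\log p_n=o(n^{\alpha/(2+\alpha)})$ is the exact condition making the truncation level admissible. Two points would need tightening in a full write-up. First, the truncation corrections must be measured at the right scale: for the mean shift the relevant requirement is $x_n\sqrt n\,\left|\E\left[\xi_1\mathbf 1_{|\xi_1|>T_n}\right]\right|\to 0$, so that $(x_n')^2-x_n^2\to 0$ in the exponent, not merely ``$o(x_n^{-2})$''; it holds because the tail expectation is $o(n^{-M})$ for every $M>0$, but that is the bound to invoke. Second, the evaluation of the residual tilted expectation as $(x_n\sqrt{2\pi})^{-1}(1+o(1))$ is the heart of the matter and is only asserted; it follows from a Berry--Esseen bound for the tilted truncated summands with error $O(\rho_n/\sqrt n)=o(1/x_n)$, available because the tilted third absolute moments stay bounded as $h_nT_n\to 0$. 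With those two steps written out, your proof is complete and correct.
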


Let us check all the hypotheses of this lemma above. First we write :
	\begin{equation} 
	\fP\left(\left| {}^tX^1X^{\tau+ K +2}\right|>a_n\right)=\fP(\left(\left| \sum\limits_{i=1}^{n}X^1_iX^{\tau+ K +2}_i\right|>a_n\right)
	\end{equation} 
	Now, if we define $\xi_i=X^1_iX^{\tau+ K +2}_i$, we have :
	\begin{enumerate}
		\item $\displaystyle \E[\xi_i] \overset{\indep}{=} \E[X^1_i]\E[X^{\tau+ K +2}_i]= 0 \times 0 =0$ where the independence come from the sample.
		\item $\E[\xi_i^2] \overset{\indep}{=} \E[\left(X^1_i\right)^2]\E[\left(X^{\tau+ K +2}_i\right)^2]=1 \times 1 =1$
		\item For $t_0=\frac{1}{2}$ and $\alpha=1$, we have : $$\E[e^{t_0|\xi_1|^{\alpha}}]=\E[e^{\frac{|X_1^1X_1^{\tau+K+2}|}{2}}]\leqslant \E[e^{\frac{1}{2}\left(X_1^1\right)^2}]\E[e^{\frac{1}{2}\left(X_1^{\tau +K+2}\right)^2}]<+\infty$$
		\item We have $w_n := \frac{a_n}{\sqrt{n\log(p)}} \ninf \sqrt{4}=2 >0$
		\item According to the hypothesis 1 from theorem \ref{main_result} : $\log(p_n)=o(n^{\frac{1}{3}})$ as $n \ninf +\infty$
	\end{enumerate}
	So we have all hypothesis needed to apply the lemma \ref{lemme68}, and then :\\
	\begin{multline}
		\fP\left(\left|{}^tX^1X^{\tau + K + 2}\right|>a_n\right)  =  \fP\left(\frac{1}{\sqrt{n\log(p_n)}}\left|{}^tX^1X^{\tau + K + 2}\right|>\frac{a_n}{\sqrt{n\log(p_n)}}\right)\\
		=  \fP\left(\frac{1}{\sqrt{n\log(p_n)}}\left|\sum\limits_{i=1}^{n}X^1_iX^{\tau + K + 2}_i\right|>\frac{a_n}{\sqrt{n\log(p_n)}}\right)\\
		=  \fP\left(\frac{1}{\sqrt{n\log(p_n)}}\sum\limits_{i=1}^{n}X^1_iX^{\tau + K +2}_i>\frac{a_n}{\sqrt{n\log(p_n)}}\right)
		 +  \fP\left(\frac{1}{\sqrt{n\log(p_n)}}\sum\limits_{i=1}^{n}X^1_iX^{\tau + K + 2}_i < -\frac{a_n}{\sqrt{n\log(p_n)}}\right)\\
	 =  \fP\left(\frac{1}{\sqrt{n\log(p_n)}}\sum\limits_{i=1}^{n}X^1_iX^{\tau + K +2}_i>w_n\right)
		 +  \fP\left(-\frac{1}{\sqrt{n\log(p_n)}}\sum\limits_{i=1}^{n}X^1_iX^{\tau + K + 2}_i > w_n\right)
	\end{multline}
	From lemma \ref{lemme68} 
	\begin{eqnarray*}
	\fP\left(\left|{}^tX^1X^{\tau + K + 2}\right|>a_n\right) &=& \frac{1}{2\sqrt{2\pi}}p_n^{-\frac{1}{2}w_n^2}\frac{1}{\sqrt{\log(p_n)}}(1+o(1))+\frac{1}{2\sqrt{2\pi}}p_n^{-\frac{1}{2}w_n^2}\frac{1}{\sqrt{\log(p_n)}}(1+o(1))\\
	&=&  \frac{1}{\sqrt{2\pi}}p_n^{-\frac{1}{2}w_n^2}\frac{1}{\sqrt{\log(p_n)}}(1+o(1))\\
	&=&  \frac{1}{2\pi\log(p_n)}e^{-\frac{1}{2}\frac{a_n^2}{n\log(p_n)}-\frac{1}{2}\log\log(p_n)}(1+o(1)) =  \frac{1}{p_n^2\sqrt{2\pi}}e^{-\frac{1}{2}y}(1+o(1))
	\end{eqnarray*}	

\subsection{Proof of Lemma lemma \ref{probIk}}

We remind that $\fPk := \fP\left(\left| {}^tX^1X^{\tau+1}\right|>a_n\right)$. We will apply once again lemma \ref{lemme68}, with new quantities $\xi$ and $\omega$:

\begin{itemize}
	\item[$\bullet$] $\displaystyle \fPk^+:=\fP\left({}^tX^1X^{\tau+1}>a_n\right)$
	\item[$\bullet$] $\displaystyle \fPk^-:=\fP\left({}^tX^1X^{\tau+1}<-a_n\right)$
	\item[$\bullet$] $\displaystyle \xi_k:=X_k^1X_k^{\tau+1}$
	\item[$\bullet$] $\displaystyle w_k:=\frac{1}{\sqrt{1+\eps_n^2\left(\xi_k-\eps_n\right)}}$.\\
\end{itemize}

Notice that $\left(\xi_k\right)_{k\geqslant 1}$ are independent due to the independence between each line of $\mathbf{X}_n$. First we compute $\E\left[\xi_k\right]=\eps_n$ and $\text{var}\left(\xi_k\right)=1+\eps_n^2$. So, $\E\left[w_k\right]=0$ and $\text{var}\left(w_k\right)=1$.We will apply the lemma \ref{lemme68} with $w_k$. Then,

\begin{equation}
\fPk^+  =  \fP\left(\frac{1}{\sqrt{n\log(p_n)}}\sum\limits_{k=1}^{n}w_k > \underbrace{ \frac{a_n-n\eps_n}{\sqrt{(1+\eps_n^2)n\log(p_n)}}}_{:=z_n}\right)
\end{equation}

From hypotheses of theorem \ref{main_result}, we have $\liminfini\left[z_n\right]:=z=2-\gamma>0$. Then,

\begin{eqnarray*}
	\fPk^+ & \sim & \frac{1}{z\sqrt{2\pi}}p_n^{-\frac{1}{2}z_n^2}\sqrt{\log(p_n)}^{-1}\\
	& \sim & \frac{1}{z\sqrt{2\pi}}\exp\left[-\frac{1}{2}z_n^2\log\left(p_n\right)-\frac{1}{2}\log\log\left(p_n\right)\right]\\
		& \sim & \frac{1}{z\sqrt{2\pi}}\exp\left[ \frac{-2}{1+\eps_n^2}\log\left(p_n\right)\left(1+\frac{\eps_n^2}{4}\frac{\log\log\left(p_n\right)}{\log\left(p_n\right)}+\frac{y}{4}\frac{1}{\log\left(p_n\right)}+\frac{n\eps_n^2}{4\log\left(p_n\right)}-\frac{\eps_n}{2}\frac{a_n}{\log\left(p_n\right)}\right) \right]\\
\end{eqnarray*}

With our hypotheses on $\eps_n$, we have :

\begin{itemize}
	\item[$\bullet$] $\displaystyle \frac{-2}{1+\eps_n^2}\log\left(p_n\right) \underset{n\rightarrow +\infty}{\longrightarrow} - \infty$
	\item[$\bullet$] $\displaystyle\frac{\eps_n^2}{4}\frac{\log\log\left(p_n\right)}{\log\left(p_n\right)} \underset{n\rightarrow +\infty}{\longrightarrow} 0$
	\item[$\bullet$] $\displaystyle\frac{y}{4}\frac{1}{\log\left(p_n\right)} \underset{n\rightarrow +\infty}{\longrightarrow} 0 $
	\item[$\bullet$] $\displaystyle\frac{n\eps_n^2}{4\log\left(p_n\right)} \underset{n\rightarrow +\infty}{\longrightarrow} \frac{1}{4}\gamma^2 $ from $\eps_n \sim \gamma\sqrt{\frac{\log(p_n)}{n}}$.
	\item[$\bullet$] $\displaystyle\frac{\eps_n}{2}\frac{a_n}{\log\left(p_n\right)} \underset{n\rightarrow +\infty}{\longrightarrow} \gamma $ from $a_n \sim 2\sqrt{n\log(p_n)}$
\end{itemize}

\begin{equation}
p_n^a\fPk^+ \sim \frac{1}{z\sqrt{2\pi}}\exp\left[
\left(a-2-\frac{1}{2}\gamma^2+2\gamma+o\left(1\right)\right)\log\left(p_n\right)
\right]\\
\end{equation}

Finally, for $\gamma \in ]-2,2[$ :

\begin{equation}\label{fpkplusnul}
\liminfini\left[p_n^a\fPk^+\right]=0 \Leftrightarrow a < 2+\frac{1}{2}\gamma^2-2\gamma
\end{equation} 
Analogously, we have : 

\begin{eqnarray}
\fPk^-  & = &  \fP\left(\frac{1}{\sqrt{n\log(p_n)}}\sum\limits_{k=1}^{n}w_k < - \frac{a_n+n\eps_n}{\sqrt{(1+\eps_n^2)n\log(p_n)}}\right)\\
& = &  \fP\left(\frac{-1}{\sqrt{n\log(p_n)}}\sum\limits_{k=1}^{n}w_k > \underbrace{ \frac{a_n+n\eps_n}{\sqrt{(1+\eps_n^2)n\log(p_n)}}}_{:=\tilde{z_n}}\right)
\end{eqnarray}

Thanks to lemma \ref{lemme68} and because $\liminfini\left[\tilde{z_n}\right]:=\tilde{z}=2+\gamma>0$, we have :

\begin{equation}
\fPk^- \sim \frac{1}{\tilde{z}\sqrt{2\pi}}\exp\left[ \frac{-2}{1+\eps_n^2}\log\left(p_n\right)\left(1+\frac{\eps_n^2}{4}\frac{\log\log\left(p_n\right)}{\log\left(p_n\right)}+\frac{y}{4}\frac{1}{\log\left(p_n\right)}+\frac{n\eps_n^2}{4\log\left(p_n\right)}+\frac{\eps_n}{2}\frac{a_n}{\log\left(p_n\right)}\right) \right]
\end{equation}

\begin{equation}
p_n^b\fPk^- \sim \frac{1}{\tilde{z}\sqrt{2\pi}}\exp\left[
\left(b-2-\frac{1}{2}\gamma^2-2\gamma+o\left(1\right)\right)\log\left(p_n\right)
\right]
\end{equation}

And finally, for $\gamma \in ]-2,2[$ :

\begin{equation}\label{fpkmoinsnul}
\liminfini\left[p_n^b\fPk^-\right]=0 \Leftrightarrow b < 2+\frac{1}{2}\gamma^2+2\gamma
\end{equation} 

To conclude, observing that $$\min\left(2+\frac{1}{2}\gamma^2+2\gamma,2+\frac{1}{2}\gamma^2-2\gamma\right)=\frac{1}{2}\gamma^2-2|\gamma|+2:=c_{\gamma},$$
combining \cref{fpkmoinsnul} and \cref{fpkplusnul}, we obtain, for all $d \in [0;c_{\gamma}[$ and as $n \rightarrow +\infty$ :

\begin{equation} 
\fPk:= \fP\left(\left| {}^tX^1X^{\tau+1}\right|>a_n\right)=o\left(p_n^{-d}\right)\,.
\end{equation}
\medskip\par
Acknowledgments: the authors wish to thank Laurent Delsol for many fruitful discussions on the model.
\bibliographystyle{alpha} 

\bibliography{bib_rapport_5}

\end{document}